\numberwithin{equation}{section} \overfullrule 5pt
\newtheorem{Theorem}{Theorem}[section]
\newtheorem{Proposition}[Theorem]{Proposition}
\newtheorem{Conjecture}[Theorem]{Conjecture}
\newtheorem{Lemma}[Theorem]{Lemma}
\theoremstyle{definition}
\newcommand{\oeis}[1]{\href{https://oeis.org/#1}{{#1}}}
\newcommand{\next}[1]{\mathscr{N}_{#1}}
\newcommand{\vs}[1]{\mathsf{V}_{#1}}
\newcommand{\cs}[1]{\mathtt{#1}}
\newcommand{\epsi}{\varepsilon}
\title{On the roots of the Poupard\\ and Kreweras Polynomials}
\date{\today}
\author{Frédéric Chapoton and Guo-Niu Han}
\address{Institut de Recherche Mathématique Avancée, 
UMR 7501, Université de Strasbourg et CNRS, 
7 rue René Descartes, 
67000 Strasbourg, France}
\email{chapoton@unistra.fr, \quad guoniu.han@unistra.fr}
\begin{document}

\maketitle

\begin{abstract}
  The Poupard polynomials are polynomials in one variable with integer
  coefficients, with some close relationship to Bernoulli and tangent
  numbers. They also have a combinatorial interpretation. We prove
  that every Poupard polynomial has all its roots on the unit
  circle. We also obtain the same property for another sequence of
  polynomials introduced by Kreweras and related to Genocchi
  numbers. This is obtained through a general statement about some
  linear operators acting on palindromic polynomials.
\end{abstract}

\section{Introduction}

Let us consider the sequence of polynomials $(F_n)_{n \geq 1}$ in one
variable $x$ characterized by the equation
\begin{equation}
  \label{recu_F}
  (x - 1)^2 F_{n+1}(x) = (x^{2n+2} + 1) F_{n}(1) - 2 x^2 F_n(x) \quad \text{for} \quad  n \geq 1,
\end{equation}
with the initial condition $F_1 = 1$. When described in this way,
their existence is not completely obvious, because the right hand side
must have a double root at $x=1$ for the recurrence to make sense. The
first few terms are given by
\begin{equation*}
  \begin{gathered}
  F_1 = 1,\\
  F_2 = x^{2} + 2x + 1,\\
  F_3 = 4x^{4} + 8x^{3} + 10x^{2} + 8x + 4,\\
  F_4 = 34x^{6} + 68x^{5} + 94x^{4} + 104x^{3} + 94x^{2} + 68x + 34.
  \end{gathered}
\end{equation*}
The polynomial $F_n$ has degree $2n-2$ and palindromic
coefficients. 

The coefficients of these polynomials form the Poupard triangle
(\oeis{A8301}), first considered in 1989 by Christiane Poupard in the
article \cite{poupard_1989} and proved there to enumerate some kind of
labelled binary trees. It follows from this combinatorial
interpretation that all coefficients of $F_n$ are nonnegative
integers. For further combinatorial information on these polynomials
and their relatives, see \cite{FH_2013, FH_2014}.

The constant terms of these polynomials form the
sequence of \textit{reduced tangent numbers} (\oeis{A2105}), that can be defined for
$n \geq 1$ by the formula
\begin{equation}
  2^n (2^{2n} - 1) |B_{2n}| / n,
\end{equation}
where $B_n$ are the classical Bernoulli numbers, and starts by
\begin{equation*}
  1, 1, 4, 34, 496, 11056, 349504, 14873104, 819786496, \dots
\end{equation*}
One can deduce from \eqref{recu_F} that $F_{n+1}(0) = F_n(1)$, so the
reduced tangent numbers also describe the values of the polynomials
$F_n$ at $x=1$.

Our first result is the following unexpected property, that was the experimental starting point of this article.

\begin{Theorem}\label{th:rootsPoupard}
  For $n\geq 1$, all roots of the polynomial $F_n(x)$ are on the unit circle.
\end{Theorem}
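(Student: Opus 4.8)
The plan is to pass to the palindromic ``half'' of each $F_n$ and thereby turn the unit-circle statement into a real-rootedness statement on an interval. Since $F_n$ is palindromic of degree $2n-2$, I would write $F_n(x) = x^{n-1} G_n(x + x^{-1})$ for a unique polynomial $G_n$ of degree $n-1$. Because the map $x \mapsto x + x^{-1}$ carries the unit circle onto $[-2,2]$ and sends a conjugate-reciprocal pair $\{x,1/x\}$ to a single point $z$, a nonzero $x$ is a root of $F_n$ on the unit circle if and only if $z = x + x^{-1}$ is a real root of $G_n$ lying in $[-2,2]$. Hence \Cref{th:rootsPoupard} is equivalent to the assertion that all $n-1$ roots of $G_n$ are real and contained in $[-2,2]$.

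Next I would rewrite the recurrence \eqref{recu_F} in the variable $z = x + x^{-1}$. Using $(x-1)^2 = x(z-2)$, the factorization $x^{2n+2}+1 = x^{n+1}p_{n+1}(z)$ with $p_k(z) = x^k + x^{-k} = 2T_k(z/2)$ the rescaled Chebyshev polynomial, and $F_n(1) = G_n(2)$, the common factor $x^{n+1}$ cancels and one is left with the clean recurrence
\begin{equation*}
  (z-2)\,G_{n+1}(z) = G_n(2)\,p_{n+1}(z) - 2\,G_n(z).
\end{equation*}
The required double root at $x=1$ of the original right-hand side becomes exactly the identity $p_{n+1}(2)=2$, which makes the new right-hand side vanish at $z=2$.

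The heart of the argument is a sign-change count for $G_{n+1}$, performed after the substitution $z = 2\cos\theta$. Writing $g_m(\theta) = G_m(2\cos\theta)$ and using $p_{n+1}(2\cos\theta)=2\cos((n+1)\theta)$, the recurrence reads $(\cos\theta-1)\,g_{n+1}(\theta) = g_n(0)\cos((n+1)\theta) - g_n(\theta)$. I would evaluate the right-hand side $\phi(\theta) := g_n(0)\cos((n+1)\theta) - g_n(\theta)$ at the extremal points $\theta_j = j\pi/(n+1)$, $j=0,1,\dots,n+1$, of the Chebyshev cosine, where $\cos((n+1)\theta_j) = (-1)^j$, giving $\phi(\theta_j) = (-1)^j g_n(0) - g_n(\theta_j)$. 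If one establishes the strict domination $g_n(0) > |g_n(\theta_j)|$ for $1\le j\le n$ (and also at $j=n+1$ when $n\ge2$), then $\phi(\theta_j)$ carries the alternating sign $(-1)^j$, yielding $n$ sign changes across $\theta_1,\dots,\theta_{n+1}$ and hence $n$ roots of $\phi$ in $(0,\pi)$. Since $\cos\theta-1<0$ there, these are precisely $n$ roots of $g_{n+1}$, i.e.\ $n$ real roots of $G_{n+1}$ in $(-2,2)$; as $\deg G_{n+1}=n$, these are all of them, which is the desired conclusion.

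Everything thus hinges on the domination property, and this is where I expect the real work to lie. The idea I would try is that it comes for free from the nonnegativity of the coefficients of $F_n$ (known from the combinatorial interpretation): for $|x|=1$ the triangle inequality gives $|F_n(x)| \le \sum_k a_k = F_n(1)$, and since $g_n(\theta) = e^{-i(n-1)\theta}F_n(e^{i\theta})$ is real with $|g_n(\theta)| = |F_n(e^{i\theta})|$, this is exactly $|g_n(\theta)| \le g_n(0)$. Strictness at an interior $\theta_j\in(0,\pi)$ follows because equality would force all $e^{ik\theta_j}$ with $a_k>0$ to coincide; as the recurrence gives $a_0 = F_n(0) > 0$ and $a_1 > 0$, this would force $e^{i\theta_j}=1$, which is impossible, and the same parity obstruction ($a_0$ in even degree, $a_1$ in odd degree) gives strictness at $\theta=\pi$ for $n\ge2$. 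I therefore expect the remaining obstacles to be bookkeeping rather than conceptual: pinning down the strictness/parity claims, checking the small cases $F_1=1$ and $F_2=(x+1)^2$ directly, and phrasing the degree-versus-sign-change count so that the boundary behaviour at $z=\pm2$ is handled uniformly. A pleasant feature of this route is that the passage from $F_n$ to $F_{n+1}$ uses no inductive hypothesis about roots, only coefficient nonnegativity; this is presumably the shape of the abstract's ``general statement about linear operators'', namely that $G \mapsto \bigl(G(2)p_{n+1} - 2G\bigr)/(z-2)$ sends any polynomial dominated on $[-2,2]$ by its value at $2$ to one with all roots in $[-2,2]$.
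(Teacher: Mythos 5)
Your proposal is correct, and it reaches \cref{th:rootsPoupard} by a genuinely different route than the paper. The two arguments share the same skeleton --- prove that applying the recurrence operator to \emph{any} palindromic polynomial with nonnegative coefficients yields a polynomial with all roots on the unit circle, then induct, with no inductive hypothesis about roots, exactly as your closing remark anticipates --- but the one-step statement is established differently. The paper stays entirely on the coefficient side: \eqref{recu_F} says precisely that the second differences of the coefficients of $F_{n+1}$ equal $-2$ times the (nonnegative) coefficients of $F_n$, so $F_{n+1}$ is concave, and a concave nonnegative palindromic polynomial, after multiplication by $(1-x)^2$, satisfies the Lakatos--Losonczi criterion (\cref{lakatos}); this is \cref{th:main} and \cref{th:next_roots}. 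You instead work on the trigonometric side: the substitution $z=2\cos\theta$, sign alternation of $\phi(\theta)=g_n(0)\cos((n+1)\theta)-g_n(\theta)$ at the Chebyshev extrema $\theta_j=j\pi/(n+1)$, and the strict bound $|F_n(e^{i\theta})|<F_n(1)$ on $(0,\pi]$ from the triangle inequality. Your treatment of the crucial domination step is sound: equality would force all phases $e^{ik\theta}$ with $a_k>0$ to coincide, and $a_0>0$, $a_1=2a_0>0$ do follow from \eqref{recu_F} by comparing constant and linear coefficients (nonnegativity of all coefficients being quoted from the combinatorial interpretation, as the paper itself does). The items you defer are indeed routine: the sign-change count gives $n$ distinct roots of your half-polynomial in $(-2,2)$, which is its full degree, and the cases $n=1,2$ are checked by hand. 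As for what each approach buys: the paper's proof is shorter, needs no boundary case analysis, applies uniformly to all operators $\next{D}$ (including $D=0,1$), and yields unimodality and concavity of $F_n$ as by-products; yours is self-contained (no appeal to \cite{lakatos_2004}), shows the roots are simple for $n\ge 3$, and localizes one root-angle of $F_{n+1}$ in each interval $\left(j\pi/(n+1),(j+1)\pi/(n+1)\right)$ for $1\le j\le n$ --- an interval containing exactly one root of $x^{2n+2}+1$ --- thereby proving the proximity of the roots of $F_n$ to roots of $x^{2n}+1$ that the paper only observes empirically in \cref{figure_F12}. Two cosmetic caveats: your $G_n$ collides with the paper's notation for the Kreweras polynomials, and the general operator statement you conjecture at the end (domination by the value at $z=2$) is not the paper's actual hypothesis (nonnegative coefficients), though both are valid sufficient conditions.
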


This is proved in \cref{sec:roots} in a much more general context, by
showing that, for any positive integer $D$, a linear operator $\next{D}$
maps palindromic polynomials with nonnegative coefficients to
palindromic polynomials with nonnegative coefficients and all roots on
the unit circle.

\smallskip

As another interesting application, one can consider the sequence of polynomials
characterized by
\begin{equation}
  \label{recu_K}
  (x - 1)^2 G_{n+1}(x) = (x^{2n+3} + 1) G_{n}(1) - 2 x^2 G_n(x) \quad \text{for} \quad  n \geq 1,
\end{equation}
with initial condition $G_1 = 1 + x$. The first few terms are
\begin{equation*}
  \begin{gathered}
    G_1 = x + 1,\\
    G_2 = 2x^{3} + 4x^{2} + 4x + 2,\\
    G_3 = 12x^{5} + 24x^{4} + 32x^{3} + 32x^{2} + 24x + 12,\\
    G_4 = 136x^{7} + 272x^{6} + 384x^{5} + 448x^{4} + 448x^{3} + 384x^{2} + 272x + 136.
  \end{gathered}
\end{equation*}
The polynomial $G_n$ has degree $2n-1$ and palindromic coefficients. 

\begin{Theorem}\label{th:rootsKreweras}
  For $n\geq 1$, all roots of the polynomial $G_n(x)$ are on the unit circle.
\end{Theorem}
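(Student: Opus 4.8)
The plan is to recognize that the Kreweras recurrence \eqref{recu_K} is governed by exactly the same family of linear operators $\next{D}$ that controls the Poupard recurrence, so that \cref{th:rootsKreweras} will follow from the same general statement announced for \cref{th:rootsPoupard} by a routine induction. The only differences between \eqref{recu_F} and \eqref{recu_K} are the shift in exponent ($2n+2$ versus $2n+3$) and the initial condition, and neither affects the underlying mechanism.

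First I would pin down the operator. Writing $\next{D}$ for the linear map
\[
  P \longmapsto \frac{(x^{D}+1)\,P(1) - 2x^2 P(x)}{(x-1)^2},
\]
I would check that, when $P$ is palindromic, the numerator vanishes to order two at $x=1$ precisely when $\deg P = D-4$: the value at $x=1$ vanishes automatically, and the first-derivative condition reduces, via the palindromy identity $2P'(1) = (\deg P)\,P(1)$, to the equation $D-4 = \deg P$. Thus $\next{D}$ is well defined on palindromic polynomials of degree $D-4$, and \eqref{recu_F} reads $F_{n+1} = \next{2n+2}(F_n)$ while \eqref{recu_K} reads $G_{n+1} = \next{2n+3}(G_n)$; the sole distinction between the two cases is the parity of $D$, matched by the parity of the degrees ($2n-2$ versus $2n-1$).

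Next I would run the induction on the single invariant ``palindromic with nonnegative coefficients.'' The base case is immediate: $G_1 = 1+x$ is palindromic with nonnegative coefficients, and its only root $x=-1$ lies on the unit circle. For the inductive step, assume $G_n$ is palindromic with nonnegative coefficients. Since $\deg G_n = 2n-1 = (2n+3)-4$, the operator $\next{2n+3}$ applies to $G_n$, and \eqref{recu_K} identifies $G_{n+1} = \next{2n+3}(G_n)$. The general statement proved in \cref{sec:roots}---that for every positive integer $D$ the operator $\next{D}$ sends palindromic polynomials with nonnegative coefficients to palindromic polynomials with nonnegative coefficients and all roots on the unit circle---then shows that $G_{n+1}$ is again palindromic with nonnegative coefficients and, moreover, has all its roots on the unit circle. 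This both propagates the invariant and delivers the conclusion at each stage.

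I expect no genuine analytic obstacle, since all the substantive work is absorbed into the general theorem about $\next{D}$; the points requiring care are purely bookkeeping. I must confirm that the general result is indeed stated for all positive integers $D$, and not merely for the even values $2n+2$ arising in the Poupard case, so that the odd values $D=2n+3$ are covered, and that the degree matching $\deg G_n = D-4$ holds at every step so that each application of $\next{D}$ is legitimate. Once these are verified, the induction carries through verbatim, with the nonnegativity of the coefficients being the only property that must be propagated by hand, the unit-circle property being supplied for free by the general statement.
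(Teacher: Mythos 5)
Your proposal is correct and takes essentially the same route as the paper: the paper also recognizes \eqref{recu_K} as one application per step of the general operator of \cref{sec:roots} and deduces the result by induction from $G_1 = 1+x$ via \cref{th:next_roots}. The only divergence is bookkeeping — in the paper's parametrization the factor $2x^D$ multiplying $P(x)$ carries the parameter while the exponent $d+2D$ adapts to the index $d$ of the input, so both \eqref{recu_F} and \eqref{recu_K} are iterations of the single operator $\next{2}$ (your ``$\next{2n+3}$'' is the paper's $\next{2}$ acting on $\vs{2n-1}$), and your even/odd-$D$ concern becomes, in the paper's terms, the satisfied requirement that \cref{th:next_roots} holds for every index $d$, odd as well as even.
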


Because the polynomials $G_n$ have odd degree, they are all divisible
by $x+1$. One can also show by induction that the polynomial $G_n$ is
divisible by $2^{n-1}$. The quotient polynomials $2^{1-n} G_n/(x+1)$
have appeared in an article of Kreweras \cite{kreweras_1997} dealing
with refined enumeration of some sets of permutations. Their constant
terms are the Genocchi numbers (\oeis{A1469}), given by the formula
\begin{equation}
  2 (2^{2n} - 1) |B_{2n}|,
\end{equation}
where $B_n$ are again the Bernoulli numbers.

Both theorems above are proved in \cref{sec:roots} using a familly of
operators $\next{D}$ acting on palindromic polynomials. Section 3
describes explicit simple eigenvectors of the operator
$\next{1}$. In section 4, some evidence is given for the general
asymptotic behaviour of the iteration of the operators $\next{D}$ for
$D>1$. The last section contains various statements and conjectures on
values of the operators $\next{D}$ on specific palindromic polynomials.

\smallskip

Let us note as a side remark that another familly of polynomials, also
related to Bernoulli numbers, has been proved in \cite{lalin,
  lalin_addendum} to have only roots on the the unit circle, by
different methods.

\section{Operators $\next{D}$ and roots on the unit circle}

\label{sec:roots}

Let us consider a polynomial $P(x) = \sum_{j=0}^d p_j x^j$ with
rational coefficients. Let us say that the polynomial $P$ is
\textit{palindromic of index} $d$ if $p_j = p_{d-j}$ for all $j$. Note
that the index can also be described as the sum of the degree and the
valuation. For example, the index of the polynomial $x = 0 + x + 0 x^2$ is $2$. For
any $d\geq 0$, let $\vs{d}$ be the vector space spanned by palindromic
polynomials of index $d$.

For every nonnegative integer $D$, let us introduce a linear operator
$\next{D}$ from $\vs{d}$ to $\vs{d+2D-2}$. This
operator is characterized by the following formula:
\begin{equation}
  \label{def_next}
  (x - 1)^2 \next{D}(P)(x) = (x^{d + 2D} + 1) P(1) - 2 x^D P(x).
\end{equation}
The definition requires that the right hand side is divisible by
$(x-1)^2$. By linearity of \eqref{def_next}, it is enough to check
this property for the basis elements $x^{i}+x^{d-i}$ with $0 \leq i \leq d$, where one finds
\begin{equation}
  \label{comput}
  \next{D}(x^{i}+x^{d-i}) = 2 \frac{(1-x^{i+D})}{(1-x)}\frac{(1-x^{d+D-i})}{(1-x)},
\end{equation}
which is a polynomial with nonnegative integer coefficients. Note that
when $d = 2i$, one can divide \eqref{comput} by $2$.

The definition of $\next{D}$ and formula \eqref{comput} imply
immediately the following lemma.
\begin{Lemma}
  Let $P$ be a non-zero palindromic polynomial of index $d$ with
  nonnegative integer coefficients. If $d \leq 1$, assume moreover that
  $D > 0$. Then $\next{D}(P)$ is a non-zero palindromic polynomial
  of index $d+2D-2$ with positive integer coefficients.
\end{Lemma}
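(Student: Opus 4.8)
The plan is to prove this Lemma by reducing everything to the explicit formula \eqref{comput}, since the key structural facts are already encoded there. The statement has three assertions about $\next{D}(P)$: that it is palindromic of index $d+2D-2$, that it has nonnegative (indeed positive) integer coefficients, and that it is nonzero. I would handle them in that order, exploiting linearity throughout.

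Let me think about each piece carefully.

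For the index claim: the operator is already declared to map $\vs{d}$ to $\vs{d+2D-2}$, so palindromicity of index $d+2D-2$ is built into the definition—but I should confirm it is genuinely achieved. From \eqref{comput}, each basis image is a product of two factors $\frac{1-x^{i+D}}{1-x} = 1 + x + \cdots + x^{i+D-1}$ and $\frac{1-x^{d+D-i}}{1-x} = 1 + x + \cdots + x^{d+D-i-1}$. Each such factor is palindromic (it's $\sum_{j=0}^{k-1} x^j$), with indices $i+D-1$ and $d+D-i-1$ respectively, and a product of palindromic polynomials is palindromic with index equal to the sum of indices, namely $(i+D-1)+(d+D-i-1) = d+2D-2$. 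So each basis image lies in $\vs{d+2D-2}$, and by linearity so does $\next{D}(P)$.

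For nonnegativity and integrality: writing $P = \sum_j p_j x^j$ with $p_j \geq 0$ integers (and $p_j = p_{d-j}$), I group the expansion into the palindromic basis. Writing $P = \frac{1}{2}\sum_{i=0}^{d} p_i (x^i + x^{d-i})$ would double-count, so more carefully I would pair index $i$ with $d-i$ and use \eqref{comput} on each pair (noting the remark that when $d=2i$ one divides by $2$). Because every $\next{D}(x^i + x^{d-i})$ in \eqref{comput} is a product of two polynomials each having all coefficients in $\{0,1\}$, it has nonnegative integer coefficients, and a nonnegative integer combination of these is again a nonnegative-integer-coefficient polynomial.

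The subtle point—and what I expect to be the main obstacle—is the passage from merely \emph{nonnegative} to strictly \emph{positive} coefficients, which is where the hypotheses ``$P \neq 0$'' and the caveat ``if $d \leq 1$ then $D > 0$'' come into play. The factors $\frac{1-x^{i+D}}{1-x}$ and $\frac{1-x^{d+D-i}}{1-x}$ each have \emph{all} coefficients equal to $1$ from $x^0$ up to their top degree (no gaps), so their product has \emph{strictly positive} coefficients in every degree from $0$ to $d+2D-2$—provided both factors are nonconstant, i.e. $i+D \geq 1$ and $d+D-i \geq 1$. Here is where one must check the degenerate cases: if $D = 0$ and $d \leq 1$, a factor can collapse to a constant or the whole product can fail to fill every degree, which is exactly why the hypothesis excludes that situation. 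Granting $\next{D}(P) \in \vs{d+2D-2}$ with nonnegative coefficients and at least one positive contributing term (from $P \neq 0$), and using that each contributing basis image already has \emph{all} coefficients strictly positive in the full degree range, I conclude every coefficient of the sum is strictly positive; nonzeroness then follows as an immediate corollary. I would write out the small verification that the excluded case $(D=0,\, d\leq 1)$ is the only one where a factor degenerates, to justify that the stated hypotheses are exactly what is needed.
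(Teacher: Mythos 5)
Your route is the paper's route: the paper gives no written argument at all, asserting that the lemma follows ``immediately'' from \eqref{def_next} and \eqref{comput}, and your fleshed-out version of palindromicity (the two all-ones factors have indices $i+D-1$ and $d+D-i-1$, summing to $d+2D-2$) and of nonnegative integrality (expand $P$ in the basis $x^i+x^{d-i}$, minding the middle term) is exactly what that ``immediately'' hides.

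The problem is in your positivity step, and it is a genuine one. You claim that the only degenerate case is $(D=0,\ d\leq 1)$ and that, under the stated hypotheses, ``each contributing basis image already has all coefficients strictly positive.'' That is false: the factor $(1-x^{i+D})/(1-x)$ vanishes \emph{identically} whenever $i+D=0$, i.e.\ whenever $D=0$ and $i=0$, \emph{for every} $d$. Hence $\next{0}(x^i+x^{d-i})=0$ for the pair $i=0$, which is precisely the paper's own \cref{kernel}. So for $D=0$ and $P=1+x^d$ with $d\geq 2$, all hypotheses of the lemma hold (the caveat is vacuous since $d\geq 2$), yet $\next{0}(P)=0$ is neither nonzero nor has positive coefficients. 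Your planned ``small verification that $(D=0,\ d\leq 1)$ is the only place a factor degenerates'' therefore cannot be carried out: the statement you are trying to prove is itself false for $D=0$ (it contradicts \cref{kernel}), and no argument closes that gap without amending the hypotheses --- for instance requiring $D\geq 1$, or, when $D=0$, requiring $P$ to have some positive coefficient $p_i$ with $1\leq i\leq d-1$. For $D\geq 1$ your proof is complete and correct: then $i+D\geq 1$ and $d+D-i\geq 1$ always hold, each basis image has strictly positive coefficients in every degree $0,\dots,d+2D-2$, and a nonzero nonnegative $P$ contributes at least one such term. (A minor slip on top of this: the relevant condition on a factor is being nonzero, not ``nonconstant''; a constant factor $1$, which occurs when $i+D=1$, is harmless.)
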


Let us record the following useful statement as a lemma.
\begin{Lemma}
  \label{power_of_two}
  When iterating $i$ times $\next{D}$ on an palindromic polynomial $P$
  of odd index with integer coefficients, the integer $2^i$ divides
  $\next{D}^i P$.
\end{Lemma}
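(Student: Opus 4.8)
The plan is to argue by induction on $i$, the key observation being that the operator $\next{D}$ preserves the parity of the index. Indeed, $\next{D}$ maps $\vs{d}$ to $\vs{d+2D-2}$, and $2D-2$ is even, so an odd index stays odd under every iteration. This matters because, looking back at formula \eqref{comput}, the explicit factor $2$ on the right-hand side is only absorbed in the case $d=2i$, that is, when the palindromic polynomial has a genuine middle term $x^{d/2}$ and the remark following \eqref{comput} lets us divide by $2$. Such a middle term occurs precisely for even index, so for odd $d$ it never appears.

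First I would fix, for odd $d$, the palindromic basis of $\vs{d}$ given by the polynomials $x^{j}+x^{d-j}$ with $0 \le j \le (d-1)/2$, noting that $j \ne d-j$ always holds since $d$ is odd. Any palindromic polynomial $P$ of odd index $d$ with integer coefficients is an integer linear combination of these basis elements. By \eqref{comput}, each basis element satisfies $\next{D}(x^{j}+x^{d-j}) = 2\,Q_j$, where $Q_j$ is a polynomial with nonnegative integer coefficients. Hence, by linearity of $\next{D}$, the polynomial $\next{D}(P)$ equals $2$ times a polynomial with integer coefficients; this settles the case $i=1$.

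For the induction step, suppose $2^{i-1}$ divides $R := \next{D}^{\,i-1}P$, and write $R = 2^{i-1}S$ with $S$ a polynomial with integer coefficients. Since scaling by a constant does not change the index and the parity of the index is preserved under $\next{D}$, both $R$ and $S$ still have odd index. Applying the previous paragraph to $S$ and using linearity, we get $\next{D}(R) = 2^{i-1}\next{D}(S) = 2^{i-1}\cdot 2\,T = 2^{i}\,T$ for some polynomial $T$ with integer coefficients. With the trivial base case $i=0$, this completes the induction.

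The only point demanding care — and indeed the whole content of the statement — is the preservation of odd index under $\next{D}$. This is exactly what guarantees that we never land in the $d=2i$ case of \eqref{comput}, where the factor $2$ would be lost and the clean doubling at each step would fail.
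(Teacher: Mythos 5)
Your proof is correct, but it follows a different route from the paper's. The paper's own argument is a two-line structural one: a palindromic polynomial of odd index vanishes at $x=-1$, hence is divisible by the monic polynomial $x+1$, so that $P=(x+1)R$ with $R$ integral and $P(1)=2R(1)$ is even; in the defining relation \eqref{def_next} both terms $(x^{d+2D}+1)P(1)$ and $2x^{D}P(x)$ are then divisible by $2$, so $\next{D}(P)$ acquires one further factor $2$ (using that $(x-1)^{2}$ is monic to divide over $\mathbb{Z}$), and induction finishes, the parity of the index being preserved. You instead work with the integral basis $x^{j}+x^{d-j}$, $0\leq j\leq (d-1)/2$, of the lattice of integer palindromic polynomials of odd index $d$, and read the factor $2$ off formula \eqref{comput}, observing that the exceptional case $d=2j$ --- where that factor can be divided out --- never occurs when $d$ is odd. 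Both arguments ultimately trace the factor $2$ to the same source (for a basis element one has $P(1)=2$, and the term $-2x^{D}P$ carries an explicit $2$), but yours is organized coefficientwise while the paper's is a single global divisibility statement. Your version makes the integrality of the extracted factor completely explicit, at no point needing to divide by $(x-1)^{2}$; the paper's is shorter and isolates a cleaner invariant, the evenness of $P(1)$. One quibble: your closing claim that the preservation of odd index is ``the whole content'' of the lemma undersells the other half of the argument, namely that the factor $2$ produced by \eqref{comput} is extracted integrally at every step --- both ingredients are needed for the induction to close.
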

\begin{proof}
  If the index of a palindromic polynomial $P$ is odd, then it is
  divisible by $x+1$. When $P$ has integer coefficients, formula
  \eqref{def_next} then implies that $\next{D}(P)$ has one further
  factor $2$. The lemma follows by induction.
\end{proof}

Recall that a palindromic polynomial $P=\sum_{j=0}^d p_j x^j$ is
called \textit{unimodal} if the sequence of coefficients is increasing
up to the middle coefficient(s), then decreasing. A polynomial $P$ is
called \textit{concave} if the piecewise linear function that maps $j$
to $p_j$ is a concave function. A concave polynomial $P$ is called
\textit{strictly concave} if every point $(j, p_j)$ is moreover an
extremal point in the graph of this piecewise linear function.

\begin{Lemma}
  Let $P$ be a non-zero palindromic polynomial of index $d$ with
  nonnegative integer coefficients. If $d \leq 1$, assume moreover that
  $D > 0$. Then $\next{D}(P)$ is unimodal and
  concave. If $P$ has no zero coefficient, then $\next{D}(P)$ is strictly concave.
\end{Lemma}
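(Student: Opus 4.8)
The plan is to reduce everything to the ``atomic'' outputs already computed in \eqref{comput}. Writing $Q_k(x) = (1-x^k)/(1-x) = 1 + x + \cdots + x^{k-1}$, I would decompose the palindromic polynomial $P$ of index $d$ over the basis $x^i + x^{d-i}$ (keeping the middle monomial $x^{d/2}$ separate when $d$ is even), and use the linearity of $\next{D}$ together with \eqref{comput} to write
\begin{equation*}
  \next{D}(P) = \sum_{i} w_i\, Q_{i+D}(x)\, Q_{d+D-i}(x),
\end{equation*}
a combination with nonnegative integer weights $w_i$ read off from the coefficients $p_i$ of $P$. Each summand is palindromic of the target index $d+2D-2$, centred at $(d+2D-2)/2$, and as soon as $i+D\geq 1$ and $d+D-i\geq 1$ (which the hypotheses guarantee) it is strictly positive on its whole support $\{0,\dots,d+2D-2\}$.

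The first claim, that $\next{D}(P)$ is concave and unimodal, is then almost formal. The coefficient sequence of a single product $Q_aQ_b$ is the trapezoidal sequence $1,2,\dots,\min(a,b),\dots,\min(a,b),\dots,2,1$, whose piecewise-linear interpolation has successive slopes $+1$, then $0$, then $-1$; since these are non-increasing, each atom is concave. Concavity, being non-positivity of the second differences, is preserved under nonnegative linear combinations, so $\next{D}(P)$ is concave. Combined with the positivity of its coefficients --- already guaranteed by the first lemma above --- concavity forces the first differences to change sign exactly once, so $\next{D}(P)$ increases up to the centre and then decreases, i.e.\ it is unimodal.

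For strict concavity under the extra hypothesis that $P$ has no zero coefficient, I would argue through the second differences. An atom $Q_aQ_b$ contributes a strictly negative second difference only at its two ``corners'', the positions $a-1$ and $b-1$ where the slope drops, and a vanishing second difference along each of the three linear stretches. Hence strict concavity of $\next{D}(P)$ --- that $(j,p_j)$ be an extremal vertex of the graph for every interior $j$ --- is equivalent to saying that every interior position is a corner of at least one atom carrying a positive weight $w_i$. When $P$ has no zero coefficient every $w_i$ is positive, so I would track the corner positions $i+D-1$ and $d+D-i-1$ as $i$ ranges over the support and show that together they sweep out a contiguous block of indices covering the whole interior range.

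The main obstacle is precisely this coverage step. Each atom only ``sees'' two interior points, so the strict inequality at a given coefficient must be supplied by some \emph{other} atom; making the union of the corner-blocks provably contain the entire interior range $\{1,\dots,d+2D-3\}$ requires a careful comparison between the span of the corners, governed by the index $d$, and the length of the output, governed by $d$ and $D$ jointly. This is the one place where the no-zero-coefficient hypothesis is genuinely used, and where the comparison of ranges --- rather than any deeper idea --- must be carried out with care.
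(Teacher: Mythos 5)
You have reconstructed the paper's proof exactly: the decomposition over the basis $x^i+x^{d-i}$, the trapezoidal atoms from \eqref{comput}, concavity as non-positivity of second differences preserved under nonnegative combinations, and the reformulation of strict concavity as ``every interior position is a corner of some atom carrying positive weight''. But you stop at the only step that carries real content. The coverage you defer is a one-line computation: the atom attached to $x^i+x^{d-i}$ has its two corners at positions $i+D-1$ and $d+D-i-1$, so when $P$ has no zero coefficient (all weights positive) the corners, as $i$ runs over $0\leq i\leq d$, fill exactly the set $\{D-1,D,\dots,d+D-1\}$. This is what the paper records, phrased as ``an extremal point above every integer between $1$ and $d+1$'' (the case $D=2$). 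Without this, the strict-concavity half of the lemma is simply not proved, since everything preceding it is routine bookkeeping.

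Moreover, the comparison you postponed is not a formality to be ``carried out with care'' --- it is where the validity of the statement is decided, and it does not always come out favourably. The interior of the output consists of the positions $\{1,\dots,d+2D-3\}$, and
\begin{equation*}
  \{1,\dots,d+2D-3\}\ \subseteq\ \{D-1,\dots,d+D-1\}
\end{equation*}
holds if and only if $D\leq 2$. For $D\geq 3$ the positions $1,\dots,D-2$ and $d+D,\dots,d+2D-3$ are corners of no atom whatsoever, and strict concavity genuinely fails there: for instance
\begin{equation*}
  \next{3}(1)=(1+x+x^2)^2=1+2x+3x^2+2x^3+x^4,
\end{equation*}
where the point $(1,2)$ lies on the segment joining $(0,1)$ and $(2,3)$, hence is not an extremal point. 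So your plan completes into a correct proof only for $D\leq 2$; this covers $\next{0}$, $\next{1}$ and $\next{2}$, i.e.\ every operator the paper actually uses (the Poupard and Kreweras recurrences both have $D=2$), but no amount of care in the sweeping argument will make the coverage work beyond that, and the strict-concavity clause --- in the paper's statement as well as in your proposal --- should be read with the restriction $D\leq 2$.
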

\begin{proof}
  By \eqref{comput}, the polynomial $\next{D}(P)$ is a nonnegative linear
  combination of unimodal and concave polynomials, hence itself
  unimodal and concave. Each term in \eqref{comput} gives two extremal
  points, or just one extremal point when $i = d-i$. When $P$ has no
  zero coefficient, this implies that there is an extremal point above
  every integer between $1$ and $d+1$.
\end{proof}

\smallskip

Let us now recall a beautiful criterion obtained by Lakatos and Losonczi
in \cite{lakatos_2004}.

\begin{Lemma}
  \label{lakatos}
  Let $P(x) = \sum_{j=0}^d p_j x^j$ be a palindromic polynomial of index $d$. If 
  \begin{equation}
    |p_d| \geq \frac{1}{2} \sum_{j=1}^{d-1} |p_j|,
  \end{equation}
 then all roots of $P$ are on the unit circle.
\end{Lemma}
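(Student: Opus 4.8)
The plan is to pass to the unit circle directly and convert the palindromy into a statement about a real trigonometric function. Writing $x = e^{i\theta}$ and using $p_j = p_{d-j}$, I would introduce $g(\theta) := e^{-id\theta/2} P(e^{i\theta}) = \sum_{j=0}^{d} p_j e^{i(j-d/2)\theta}$, which is real-valued because reindexing $j \mapsto d-j$ shows it is invariant under $\theta \mapsto -\theta$. Pairing the terms $j$ and $d-j$ isolates the top-frequency contribution coming from $j \in \{0,d\}$, and since $p_0 = p_d$ one gets $g(\theta) = 2 p_d \cos(d\theta/2) + R(\theta)$, where $R(\theta) = \sum_{j=1}^{d-1} p_j e^{i(j-d/2)\theta}$ is real and collects the inner terms. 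A zero of $g$ in $[0,2\pi)$ is precisely the argument of a root of $P$ on the unit circle, so it suffices to exhibit $d$ such zeros.

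The heart of the argument is a sign-alternation estimate. Since each exponential has modulus one, $|R(\theta)| \le \sum_{j=1}^{d-1}|p_j|$ for every $\theta$, and the hypothesis is exactly $2|p_d| \ge \sum_{j=1}^{d-1}|p_j| \ge |R(\theta)|$. I would then sample $g$ at the equally spaced points $\theta_k = 2k\pi/d$ for $k = 0,\dots,d-1$, where $\cos(d\theta_k/2) = (-1)^k$, so the leading term equals $2p_d(-1)^k$, of modulus $2|p_d|$. Under the strict form of the hypothesis, $|R(\theta_k)| < 2|p_d|$ forces $\operatorname{sign} g(\theta_k) = (-1)^k \operatorname{sign}(p_d)$, so $g$ alternates sign at consecutive sample points as one goes around the circle. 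This produces at least $d$ zeros of $g$ in $[0,2\pi)$, hence $d$ roots of $P$ on the unit circle; and since the strict inequality forces $p_d \neq 0$, so that $\deg P = d$, these account for every root.

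The main obstacle is the equality case $2|p_d| = \sum_{j=1}^{d-1}|p_j|$, where the remainder can reach the value $\mp 2p_d(-1)^k$ at a sample point (all inner phases aligning), so that the sign need no longer alternate and transversal crossings may degenerate into tangencies. I would dispose of it by perturbation: replace $p_0$ and $p_d$ by $p_d + \epsilon\,\operatorname{sign}(p_d)$, apply the strict case to obtain $d$ roots on the unit circle for every small $\epsilon > 0$, and let $\epsilon \to 0$. Continuity of the roots as functions of the coefficients, together with the fact that the unit circle is closed, keeps all $d$ limiting roots on the circle, and the multiplicity count guarantees that none escapes to modulus $\neq 1$. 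The only remaining bookkeeping is the parity of $d$: for odd $d$ the function $g$ satisfies $g(\theta+2\pi) = -g(\theta)$ rather than being periodic, but evaluating the sign at $\theta_{d-1}$ and at $\theta \to 2\pi^-$ supplies the final crossing, so the count of $d$ zeros is unchanged.
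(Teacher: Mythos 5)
Your proof is correct, but note that the paper itself does not prove this lemma: it is quoted as a known criterion of Lakatos and Losonczi \cite{lakatos_2004} and then used as a black box to derive \cref{th:main}. So the comparison here is really with the literature rather than with the paper. Your argument is essentially the classical one behind such criteria: reduce to the real-valued function $g(\theta) = e^{-id\theta/2}P(e^{i\theta})$ (this uses that the coefficients are real, which is the paper's setting, since all its polynomials have rational coefficients), isolate the dominant term $2p_d\cos(d\theta/2)$, force sign alternation at the sample points $\theta_k = 2k\pi/d$ under the strict inequality, count $d$ zeros by the intermediate value theorem --- your handling of the parity of $d$ via $g(\theta+2\pi)=(-1)^d g(\theta)$ is right --- and settle the equality case by perturbing $p_0=p_d$ and invoking continuity of roots. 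Two points you leave implicit but should state: (i) under the hypothesis, $p_d=0$ forces every coefficient to vanish, so $P\equiv 0$ is the only case where $\operatorname{sign}(p_d)$ is undefined, and it must be excluded (or declared vacuous) before the perturbation step; (ii) the continuity-of-roots argument requires the degree not to drop in the limit, i.e.\ the leading coefficient of the perturbed polynomial must stay bounded away from $0$, which holds here precisely because it converges to $p_d\neq 0$. With those two remarks made explicit, your proof is complete and self-contained, which is more than the paper provides for this statement; what the paper's citation buys instead is brevity and a pointer to the stronger results of Lakatos--Losonczi, of which this lemma is a special case.
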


From this criterion, one deduces another one.

\begin{Theorem}\label{th:main}
  Let $P(x) = \sum_{j=0}^d p_j x^j$ be a palindromic polynomial of index $d$. If
  \begin{equation}
    \label{eq:condition}
     2 p_j \geq p_{j-1} + p_{j+1} \quad \text{for all}\quad 0 \leq j \leq d,
  \end{equation}
  with the convention that $p_{-1} = p_{d+1} = 0$,
  then all roots of $P$ are on the unit circle.
\end{Theorem}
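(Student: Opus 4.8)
The plan is to deduce Theorem~\ref{th:main} from the Lakatos--Losonczi criterion (Lemma~\ref{lakatos}) by showing that condition~\eqref{eq:condition} implies the inequality $|p_d| \geq \frac{1}{2}\sum_{j=1}^{d-1}|p_j|$. The key observation is that~\eqref{eq:condition} says the sequence $(p_j)_{j=0}^d$ is \emph{concave} (with the boundary convention $p_{-1}=p_{d+1}=0$), and a palindromic concave sequence starting and ending at nonnegative values forced down to the endpoints is automatically unimodal with nonnegative entries. So my first step would be to record that~\eqref{eq:condition} forces all $p_j \geq 0$: since the second differences $p_{j-1}-2p_j+p_{j+1} \leq 0$ and the sequence is pinned to $p_{-1}=p_{d+1}=0$ at the ends, concavity prevents any coefficient from dipping below the chord joining two nonnegative values, and in particular below zero. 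With nonnegativity in hand, the absolute values in Lemma~\ref{lakatos} disappear and I only need $2p_d \geq \sum_{j=1}^{d-1} p_j$, or equivalently $p_0 + p_d \geq \sum_{j=1}^{d-1} p_j$ after using palindromicity $p_0 = p_d$.

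The heart of the argument is then a purely combinatorial inequality about concave sequences: if $(p_j)$ is concave on $\{0,1,\dots,d\}$ with $p_{-1}=p_{d+1}=0$, I claim the two end coefficients dominate half the sum of the interior ones. The clean way I would carry this out is by summation by parts (Abel summation) on the second-difference inequalities. Writing $\Delta^2 p_j = p_{j+1} - 2p_j + p_{j-1} \leq 0$, I would sum $\sum_{j=0}^{d} w_j\,\Delta^2 p_j \leq 0$ against a suitable nonnegative weight sequence $w_j$ and telescope. Choosing the weights to be the ``tent'' function $w_j = \min(j+1,\,d+1-j)$ (or a comparable triangular weight that is itself concave and vanishes appropriately at the boundary) should make the telescoped sum collapse precisely into the combination $2p_d - \sum_{j=1}^{d-1}p_j$ up to the boundary terms coming from the convention $p_{-1}=p_{d+1}=0$. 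The boundary terms are exactly where the $p_d$ contributions are produced, so tracking them carefully is essential.

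I expect the main obstacle to be pinning down the exact weight sequence and the boundary bookkeeping so that the telescoping produces the inequality \emph{with the correct constant $\tfrac{1}{2}$} rather than some weaker constant; the Lakatos--Losonczi threshold is sharp, so a sloppy choice of weights will only yield $p_d \geq c\sum_{j} p_j$ for some $c < \tfrac12$ and miss the theorem. One safeguard is to test the candidate weights against the two extreme cases where concavity is tight: the constant sequence $p_j \equiv 1$ (which violates the boundary drop and so is not admissible, but is the unimodal limit) and the single ``peak'' sequences $x^i + x^{d-i}$ from~\eqref{comput}, which are the generators on which the whole theory is built. Since~\eqref{comput} exhibits $\next{D}(P)$ as a nonnegative combination of exactly such concave generators, it would suffice to verify the target inequality on each generator $\frac{(1-x^{a})(1-x^{b})}{(1-x)^2}$ and then invoke linearity, reducing the entire claim to a single finite computation on the triangular-number sequences $1,2,\dots,\min(a,b),\dots,2,1$, for which the sum of interior terms versus the end coefficient can be checked directly. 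If the direct Abel-summation route proves fiddly, this generator-reduction is the fallback I would lean on.
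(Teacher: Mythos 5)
Your approach cannot work as stated, because the inequality you are trying to establish is false. You propose to apply \cref{lakatos} directly to $P$, which requires $2p_d \geq \sum_{j=1}^{d-1} p_j$ (after using nonnegativity). But for a concave palindromic sequence the endpoints $p_0 = p_d$ are the \emph{smallest} coefficients, not the largest: concavity plus palindromicity forces unimodality, so every interior coefficient satisfies $p_j \geq p_d$, whence $\sum_{j=1}^{d-1} p_j \geq (d-1)\,p_d$. Thus the hypothesis of \cref{lakatos} fails for $P$ itself whenever $d \geq 4$ and $p_d > 0$. A concrete counterexample is
\begin{equation*}
  P(x) = (1+x+x^2)^2 = 1 + 2x + 3x^2 + 2x^3 + x^4,
\end{equation*}
which satisfies \eqref{eq:condition} (all roots are indeed on the unit circle), yet $p_4 = 1 < \tfrac{1}{2}(2+3+2)$. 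No choice of weights in your Abel summation can repair this, since the target inequality itself is false. Your fallback fails for the same reason: this $P$ \emph{is} one of the generators $\frac{(1-x^{a})(1-x^{b})}{(1-x)^2}$ (take $a=b=3$), so the verification on generators breaks at the first nontrivial case.

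The missing idea is to apply \cref{lakatos} not to $P$ but to $Q(x) = (1-x)^2 P(x)$. The interior coefficients of $Q$ are exactly the second differences $p_{j+1}+p_{j-1}-2p_j$, which hypothesis \eqref{eq:condition} makes nonpositive, while the extreme coefficients of $Q$ are $p_0 = p_d \geq 0$. Since $Q(1) = 0$, the coefficients sum to zero, so the sum of the absolute values of the interior coefficients equals $q_0 + q_{d+2} = 2|q_{d+2}|$; the Lakatos--Losonczi threshold is met \emph{with equality} for $Q$, exactly at the sharp constant you were worried about. Then all roots of $Q$ lie on the unit circle, and the roots of $P$ are among them. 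In short: multiplying by $(1-x)^2$ converts the concavity condition into a sign condition, which is what makes the criterion applicable; trying to feed the concave polynomial itself into the criterion goes in the wrong direction.
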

\begin{proof}
  Let $Q(x)=(1-x)^2 P(x)$. Then $Q(x) = \sum_{j=0}^{d+2} q_j x^j$,
  where
  \begin{align*}
    q_0&=  p_0,\\
    q_{j+1}&=   p_{j+1} + p_{j-1}-2p_j, \qquad (0\leq j \leq d) \\
    q_{d+2}&=  p_d.
  \end{align*}
  Note that $Q$ is also palindromic of index $d+2$.

  By the hypothesis \eqref{eq:condition}, all $q_j \leq 0$ for $1\leq j\leq d+1$.
  Since $Q(1)=0$, we have
  \begin{equation*}
    \sum_{j=1}^{d+1} |q_j|  =-\sum_{j=1}^{d+1} q_j = q_0 + q_{d+2} = 2 q_{d+2}.
  \end{equation*}
  Note that therefore $q_0 \geq 0$.
 
  Since $Q(x)$ is palindromic, and
  \begin{equation*}
    |q_{d+2}|  = \frac{1}{2} \sum_{j=1}^{d+1} |q_j|,
  \end{equation*}
  one can therefore apply \cref{lakatos} to $Q(x)$ and conclude that
  $Q(x)$ has all its roots on the unit circle. This implies the same
  property for $P(x)$.
\end{proof}

\begin{Theorem}
  \label{th:next_roots}
  Let $P$ be a non-zero palindromic polynomial of index $d$ with
  nonnegative integer coefficients. If $d \leq 1$, assume moreover
  that $D > 0$. Then $\next{D}(P)$ is a non-zero palindromic
  polynomial of index $d+2D-2$ with nonnegative integer coefficients and
  all roots of $\next{D}(P)$ are on the unit circle.
\end{Theorem}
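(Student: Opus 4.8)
The plan is to combine the earlier lemmas with the concavity criterion just established in Theorem~\ref{th:main}. The statement has two parts: first that $\next{D}(P)$ is a nonzero palindromic polynomial of index $d+2D-2$ with nonnegative integer coefficients, and second that all its roots lie on the unit circle. The first part is exactly the content of the first lemma in this section, so I would simply invoke it. The real work is the root location, and the strategy is to verify that $\next{D}(P)$ satisfies the concavity hypothesis~\eqref{eq:condition} of Theorem~\ref{th:main}, namely $2p_j \geq p_{j-1} + p_{j+1}$ for all $j$ (with the boundary convention $p_{-1}=p_{d+2D-1}=0$); once that is checked, Theorem~\ref{th:main} immediately delivers the conclusion.

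First I would observe that the concavity lemma proved just above already shows $\next{D}(P)$ is concave and unimodal. The key point is that concavity of the piecewise-linear interpolant is precisely the statement $2p_j \geq p_{j-1} + p_{j+1}$ at every \emph{interior} index $j$. So for indices strictly inside the support, the hypothesis of Theorem~\ref{th:main} holds for free. What remains is to handle the boundary indices, where the convention $p_{-1}=0$ or $p_{d+2D-1}=0$ is in force, i.e.\ to check that $2p_0 \geq p_1$ and symmetrically $2p_{d+2D-2} \geq p_{d+2D-3}$ at the two ends.

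The boundary condition is therefore the main obstacle, and I would address it by returning to the explicit formula~\eqref{comput}. Since $\next{D}(P)$ is a nonnegative linear combination of the polynomials $2\frac{(1-x^{i+D})}{1-x}\frac{(1-x^{d+D-i})}{1-x}$, it suffices to check $2p_0 \geq p_1$ for each such building block, because the inequality is preserved under nonnegative linear combinations. For a single term, the coefficient sequence is the convolution of two truncated constant sequences (each equal to $1$ on an initial run and $0$ afterwards), so its coefficients increase by steps of $2$ at the low end: explicitly the first few coefficients are $1,2,3,\dots$ (up to the factor $2$), giving $p_0 = 1$, $p_1 = 2$ after the common factor, so that $2p_0 = p_1$ holds with equality for the generic term and the inequality $2p_0 \geq p_1$ holds throughout. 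The same computation at the top end follows by palindromicity. Combining the interior concavity with these two boundary inequalities shows that~\eqref{eq:condition} holds for all $0 \leq j \leq d+2D-2$, so Theorem~\ref{th:main} applies and all roots of $\next{D}(P)$ lie on the unit circle.

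A cleaner route, which I would prefer if it goes through, is to avoid the explicit boundary computation entirely by noting that the concavity lemma's proof already yields an extremal point above every integer from $1$ to $d+1$ in each building block, and more to the point that each factor $\frac{1-x^{k}}{1-x} = 1 + x + \cdots + x^{k-1}$ has coefficient sequence $1,1,\dots,1,0,\dots$ whose second difference is everywhere $\leq 0$ including at the boundary (where $2\cdot 1 \geq 1 + 0$). The convolution of two such sequences then inherits the required second-difference inequality at \emph{all} indices, boundaries included, by a direct check that the discrete second difference of a convolution can be written in terms of second differences of the factors. This observation makes the verification of~\eqref{eq:condition} uniform over $0 \leq j \leq d+2D-2$ and lets me conclude directly via Theorem~\ref{th:main} without separating interior from boundary cases.
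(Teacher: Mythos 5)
Your first argument is correct and, like the paper, it rests on \cref{th:main}; but it verifies the hypothesis \eqref{eq:condition} in a more roundabout way than necessary. The paper's own proof is one line: read off the coefficient of $x^{j+1}$, for $0 \leq j \leq d+2D-2$, on both sides of the defining relation \eqref{def_next}. Writing $Q = \next{D}(P) = \sum_j q_j x^j$, the left-hand side gives the second difference $q_{j+1} - 2 q_j + q_{j-1}$, while on the right-hand side the term $(x^{d+2D}+1)P(1)$ contributes only to the exponents $0$ and $d+2D$, which lie outside this range; hence $q_{j+1} - 2q_j + q_{j-1} = -2 p_{j+1-D} \leq 0$, where $p_i = 0$ for $i$ outside $[0,d]$. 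This yields \eqref{eq:condition} at interior and boundary indices uniformly, with no need for \eqref{comput}, the concavity lemma, or a separate treatment of the two ends. Your route (concavity lemma for interior $j$, explicit coefficients $1,2,3,\dots$ of the building blocks at the ends, stability of both linear inequalities under nonnegative combinations) is valid, just longer.

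The ``cleaner route'' you say you would prefer, however, does not go through. The coefficient sequence $1,1,\dots,1,0,0,\dots$ of $1+x+\cdots+x^{k-1}$ does \emph{not} have nonpositive second difference everywhere: at the first index past the support it equals $1 - 2\cdot 0 + 0 = +1 > 0$ (your parenthetical check $2\cdot 1 \geq 1+0$ is at the last index \emph{inside} the support; the failure is one step to the right). Condition \eqref{eq:condition} asks for nonpositive second differences only on the range $0 \leq j \leq d$ with the convention $p_{-1}=p_{d+1}=0$, and this property is \emph{not} preserved by products: $1+x$ and $(1+x)^2 = 1+2x+x^2$ both satisfy \eqref{eq:condition}, yet their product $(1+x)^3 = 1+3x+3x^2+x^3$ violates it at $j=0$ because $2\cdot 1 < 3+0$. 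The identity $\Delta^2(f \ast g) = (\Delta^2 f) \ast g$ that you implicitly invoke holds only when second differences are taken over all of $\mathbb{Z}$, where each factor has positive spikes just outside its support, and those spikes are exactly what your argument ignores. The building blocks of \eqref{comput} do satisfy \eqref{eq:condition} (their coefficients form trapezoids), but the way to see this is the direct check of your first argument, not a general convolution principle.
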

\begin{proof}
  This is an application of \cref{th:main}. The definition of $\next{D}$
  and the hypothesis that $P$ has nonnegative coefficients imply
  immediately the condition \eqref{eq:condition}.
\end{proof}

Let us now apply \cref{th:next_roots} to the proofs of
\cref{th:rootsPoupard} and \cref{th:rootsKreweras}. The defining
recurrence \eqref{recu_F} for the polynomials $F_n$ can be written as
$F_{n+1} = \next{D}(F_n)$ with the initial condition $F_1 = 1$. The
property follows by induction. The same proof works for $G_n$ with the initial polynomial $1+x$.

\medskip

Let us now state two useful lemmas.
\begin{Lemma}
  \label{kernel}
  For all $d \geq 0$, the polynomial $x^d+1$ is in the kernel of $\next{0}$.
\end{Lemma}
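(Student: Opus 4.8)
The plan is to verify the claim directly from the defining formula \eqref{def_next}, specialised to $D = 0$. First I would note that $x^d + 1$ is palindromic of index $d$, so it lies in $\vs{d}$ and $\next{0}(x^d+1)$ is a well-defined element of $\vs{d-2}$. Writing $P(x) = x^d + 1$ and setting $D = 0$ in \eqref{def_next}, the defining relation becomes
\[
  (x-1)^2 \next{0}(P)(x) = (x^{d} + 1)\,P(1) - 2\,(x^d + 1).
\]

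The key (and essentially only) computation is to evaluate $P(1) = 1^d + 1 = 2$. Substituting this into the right-hand side yields $2(x^d+1) - 2(x^d+1) = 0$. Hence $(x-1)^2 \next{0}(P)$ is the zero polynomial; since $(x-1)^2 \neq 0$ in the polynomial ring, which is an integral domain, this forces $\next{0}(P) = 0$. That is exactly the assertion that $x^d + 1$ lies in the kernel of $\next{0}$.

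There is no genuine obstacle here: the statement reduces to the observation that the two terms on the right-hand side of \eqref{def_next} cancel precisely because the factor $x^{D} = x^{0} = 1$ matches the value $P(1) = 2$. I would also remark that the same conclusion can be read off from \eqref{comput}: taking $D = 0$ and $i = 0$, so that $x^{i} + x^{d-i} = 1 + x^{d}$, produces the factor $(1 - x^{i+D})/(1-x) = (1 - x^{0})/(1-x) = 0$, which annihilates the whole product. Either route gives the result in a single line, and no induction or auxiliary estimate is needed.
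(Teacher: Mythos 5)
Your proof is correct. Note that the paper's own proof is exactly the observation you relegate to a closing remark: it cites \eqref{comput}, where taking $D=0$ and $i=0$ makes the factor $(1-x^{i+D})$ vanish, so $\next{0}(x^d+1)=0$. Your primary route instead goes straight to the characterizing equation \eqref{def_next}: with $D=0$ and $P=x^d+1$ the right-hand side is $(x^d+1)P(1)-2(x^d+1)=2(x^d+1)-2(x^d+1)=0$, and since the polynomial ring is a domain, $(x-1)^2\next{0}(P)=0$ forces $\next{0}(P)=0$. Both arguments are one-line verifications and both are valid; the only point your primary route implicitly relies on is that \eqref{def_next} determines $\next{0}(P)$ uniquely (which your integral-domain remark settles) and that $\next{0}$ is well defined on $\vs{d}$ in the first place, a fact the paper established precisely by deriving \eqref{comput} on the basis elements $x^i+x^{d-i}$. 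So your argument from \eqref{def_next} is marginally more self-contained in its computation, while the paper's is an immediate specialization of a formula already in hand; there is no substantive difference, and since you give both, your proposal subsumes the paper's proof.
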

\begin{proof}
  This is a direct consequence of \eqref{comput}.
\end{proof}

\begin{Lemma}
  \label{next0_qint}
  Let $d \geq 2$ be an integer. Then
  \begin{equation}
    \next{0}(1+x+\dots+x^d) = \sum_{i=0}^{d-2} (d-1-i)(i+1) x^i.
  \end{equation}
\end{Lemma}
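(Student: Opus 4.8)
The plan is to work directly from the defining relation \eqref{def_next} rather than from the basis formula \eqref{comput}, since the polynomial $P(x) = 1 + x + \dots + x^d$ is already given in a compact form. First I would record that $P(1) = d+1$ and specialize \eqref{def_next} at $D = 0$, which yields
\begin{equation*}
  (x-1)^2 \next{0}(P)(x) = (x^{d} + 1)(d+1) - 2(1 + x + \dots + x^d).
\end{equation*}
Collecting coefficients on the right-hand side, the constant term and the coefficient of $x^d$ are both $(d+1) - 2 = d-1$, while the coefficient of $x^j$ for $1 \leq j \leq d-1$ equals $-2$.

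Next I would write $R(x) = \sum_{i=0}^{d-2}(d-1-i)(i+1)x^i$ for the claimed value of $\next{0}(P)$, so that it suffices to verify that $(x-1)^2 R(x)$ agrees with the right-hand side just computed. Expanding $(x-1)^2 = x^2 - 2x + 1$, the coefficient of $x^j$ in $(x-1)^2 R(x)$ is the second difference $r_{j-2} - 2r_{j-1} + r_j$ of the coefficient sequence $r_i = (d-1-i)(i+1)$, under the convention $r_i = 0$ for $i < 0$ or $i > d-2$. Since $r_i = -i^2 + (d-2)i + (d-1)$ is a quadratic in $i$ with leading coefficient $-1$, its interior second differences are all equal to $-2$, which matches the interior coefficients found above; the four boundary values $j \in \{0, 1, d-1, d\}$ are then a short direct check, returning $d-1$, $-2$, $-2$, and $d-1$ respectively, in agreement with the right-hand side.

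Because $(x-1)^2$ is not a zero divisor in the polynomial ring, matching both sides after multiplication by $(x-1)^2$ forces $\next{0}(P) = R$, which is the asserted identity. There is no real obstacle here: the interior is automatic from the constancy of the second differences of a quadratic, and the only point needing a little care is the bookkeeping of the boundary coefficients $j = 0, 1, d-1, d$, where that second-difference pattern is truncated by the finite range of summation. (One could alternatively sum the closed forms \eqref{comput} over the palindromic basis, but that route requires evaluating a double geometric sum and is less direct than the verification above.)
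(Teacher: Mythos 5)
Your proof is correct, but it takes a genuinely different route from the paper's. The paper derives the formula: it expands $1+x+\dots+x^d$ in the palindromic basis, applies the closed form \eqref{comput} for $\next{0}$ on each basis element $x^i + x^{d-i}$, discards the $1+x^d$ term using \cref{kernel}, and then identifies the coefficient of $x^i$ in the resulting double sum $\sum_{j=1}^{d-1} \frac{1-x^j}{1-x}\frac{1-x^{d-j}}{1-x}$ as the cardinality of a set of lattice points, counted via a bijection to get $(d-1-i)(i+1)$. You instead verify the answer: multiplying the claimed polynomial $R$ by $(x-1)^2$ and matching coefficients against the right-hand side of \eqref{def_next} with $D=0$, where the interior agreement is immediate because the second differences of the quadratic $r_i = -i^2+(d-2)i+(d-1)$ are constantly $-2$, leaving only the four boundary coefficients $j \in \{0,1,d-1,d\}$ to check (and your values $d-1,\,-2,\,-2,\,d-1$ are right; the degenerate cases $d=2,3$, where these indices collide and there is no interior, still come out correctly). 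Your argument is self-contained and more elementary --- it even re-establishes the divisibility of the right-hand side by $(x-1)^2$ for this particular input, rather than relying on \eqref{comput}, and the cancellation of $(x-1)^2$ is legitimate since it is not a zero divisor. What it gives up is any explanation of where the product $(d-1-i)(i+1)$ comes from: the paper's counting argument produces the formula rather than presupposing it, and exercises the lemmas (\eqref{comput} and \cref{kernel}) that the surrounding section has already set up.
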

\begin{proof}
  From the definition of $\next{0}$ by \eqref{comput}, and by the
  previous lemma, this is equal to
  \begin{equation*}
    \sum_{j=1}^{d-1} \frac{1-x^j}{1-x}\frac{1-x^{d-j}}{1-x}.
  \end{equation*}
  Developing, one find that the coefficient of $x^i$ is the cardinality of
  \begin{equation*}
    \left\{ (j,k) \mid 0\leq k \leq j-1 \quad\text{and}\quad 0\leq i-k \leq d-j-1\right\}.
  \end{equation*}
  But this is the same as the set
  \begin{equation*}
    \left\{ (j,k) \mid 1\leq j-k \leq d-i-1 \quad\text{and}\quad 0\leq k \leq i \right\},
  \end{equation*}
  whose cardinality is $(d-1-i)(i+1)$.
\end{proof}

\section{Sinus polynomials as eigenvectors}

As can be seen in the right picture of \cref{figure_F12}, the roots of
the Poupard polynomials $F_n(x)$ are very close to some of the roots
of $x^{2n} + 1$, with two missing roots on the right. Moreover the
plot of the coefficients of $F_n(x)$ seem to approximate a concave continuous
function, as in the left picture of \cref{figure_F12}.

\begin{figure}
  \centering
  \includegraphics[width=0.5\textwidth]{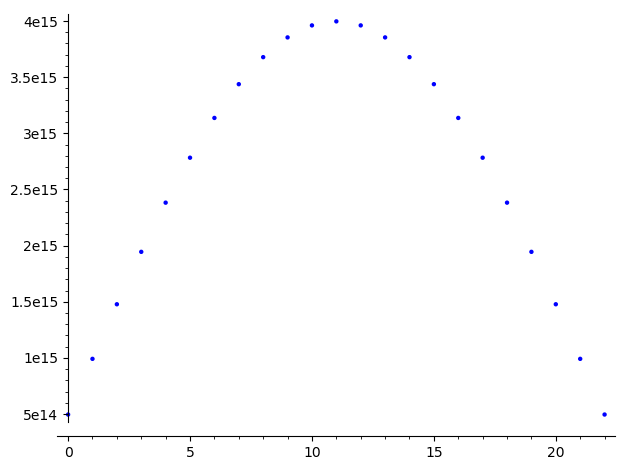}\includegraphics[width=0.5\textwidth]{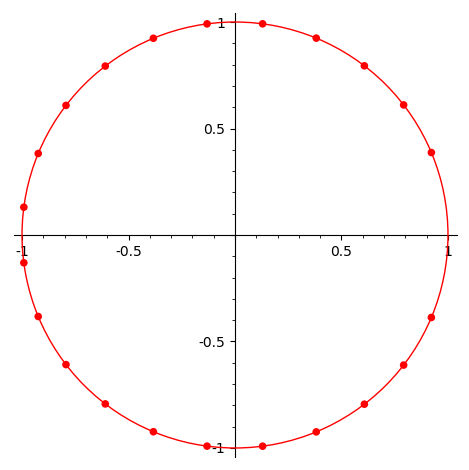}
  \caption{Coefficients and roots of the Poupard polynomial $F_{12}$}
  \label{figure_F12}
\end{figure}

One expects that, up to a global multiplicative factor, the
polynomials obtained when iterating $n$ times the operator $\next{D}$
(for some fixed $D > 1$) are always becoming, when $n$ is large, very
close to the polynomials described in this section. Some kind of
justification will be given in the next section.

\medskip

Let us consider the polynomial $S_{m,n}(x)$ defined for $n \geq 2$ and
odd $m \geq 1$ by
\begin{equation}
  \label{defi_Sn}
  S_{m,n}(x) = \frac {x^{m n}+1}{x^2 - 2x \cos{\frac{\pi}{n}}+1},
\end{equation}
whose roots are the roots of $x^{m n}+1$ except $\exp({\frac{i\pi}{n}})$ and its conjugate.

Let us first give an alternative expression for $S_{m,n}$.
\begin{Lemma}
  \label{explicit_S}
  The polynomial $S_{m,n}$ has an explicit expression
  \begin{equation}
    S_{m,n}(x) = \frac {1}{\sin(\frac {\pi}{n})}  \sum_{k=0}^{m n-2}  {\sin\left(\frac {(k+1)\pi}{n}\right) } x^{k}.
\end{equation}
\end{Lemma}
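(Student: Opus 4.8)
The plan is to establish the identity
\begin{equation*}
  (x^2 - 2x\cos\tfrac{\pi}{n} + 1)\cdot \frac{1}{\sin(\frac{\pi}{n})}\sum_{k=0}^{mn-2}\sin\!\Big(\frac{(k+1)\pi}{n}\Big)x^k = x^{mn}+1,
\end{equation*}
which by the defining equation \eqref{defi_Sn} is exactly the claim. Since the right-hand factor is a polynomial of degree $mn-2$ and the quadratic has degree $2$, the product has the correct degree $mn$, so it suffices to verify that the product equals $x^{mn}+1$ coefficient by coefficient.

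First I would multiply out the left-hand side and collect the coefficient of $x^k$. Writing $\alpha = \pi/n$ and $c_k = \sin((k+1)\alpha)$ for $0\le k\le mn-2$ (with the convention $c_k = 0$ outside this range), the coefficient of $x^k$ in the product $(x^2 - 2x\cos\alpha + 1)\sum_k c_k x^k$ is
\begin{equation*}
  c_{k-2} - 2\cos\alpha\, c_{k-1} + c_k.
\end{equation*}
The key trigonometric step is the identity $\sin((k+1)\alpha) - 2\cos\alpha\sin(k\alpha) + \sin((k-1)\alpha) = 0$, which follows from the product-to-sum formula $2\cos\alpha\sin(k\alpha) = \sin((k+1)\alpha) + \sin((k-1)\alpha)$. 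This shows that for every index $k$ in the \emph{interior} range, where all three shifted terms $c_{k-2}, c_{k-1}, c_k$ are given by the sine formula with the correct arguments, the coefficient vanishes after dividing by $\sin\alpha$.

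The main obstacle, and the only place requiring care, is the boundary behaviour: the recurrence above is clean only when the three terms all lie inside $0\le\,\cdot\,\le mn-2$, so I must treat the coefficients of $x^0, x^1, x^{mn-1}, x^{mn}$ separately and check they produce exactly $1, 0, 0, 1$. For $x^0$ the coefficient is $c_0 = \sin\alpha$, which after division by $\sin\alpha$ gives $1$. For $x^1$ it is $c_1 - 2\cos\alpha\,c_0 = \sin(2\alpha) - 2\cos\alpha\sin\alpha = 0$. At the top end I would use $\sin((mn)\alpha) = \sin(m\pi) = 0$ (here $m$ is an integer) and $\sin((mn-1)\alpha) = \sin(m\pi - \alpha) = (-1)^{m+1}\sin\alpha$; since $m$ is odd this equals $\sin\alpha$. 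Then the coefficient of $x^{mn}$ is $c_{mn-2} = \sin((mn-1)\alpha) = \sin\alpha$, giving $1$ after normalization, and the coefficient of $x^{mn-1}$ is $c_{mn-3} - 2\cos\alpha\, c_{mn-2} = \sin((mn-2)\alpha) - 2\cos\alpha\sin((mn-1)\alpha)$, which the same product-to-sum identity rewrites as $-\sin((mn)\alpha) = 0$. The oddness of $m$ is precisely what is needed to fix the sign at the top boundary, matching the constant term $+1$ rather than $-1$, so I would flag this as the subtle point of the computation.
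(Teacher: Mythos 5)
Your proof is correct and amounts to the same direct verification the paper sketches: cross-multiplying by $x^2 - 2x\cos\frac{\pi}{n} + 1$ and checking the resulting polynomial identity, with the oddness of $m$ entering exactly at the top boundary. The only cosmetic difference is that you use the real product-to-sum identity $2\cos\alpha\sin(k\alpha) = \sin((k+1)\alpha) + \sin((k-1)\alpha)$ where the paper expands in $\zeta = \exp(\frac{i\pi}{n})$; these are the same computation in different notation.
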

\begin{proof}
  The proof is a simple computation, expanding both sides as polynomials in $x$
  and $\zeta = \exp({\frac{i\pi}{n}})$, also using that $m$ is odd.
\end{proof}

This implies that the plot of the coefficients of $S_{1,n}$ looks very
much like a sinus curve, like the left image in \cref{figure_F12}.

\begin{Proposition}
  For every $n \geq 2$ and odd $m \geq 1$, the polynomial $S_{m,n}$ is
  an eigenvector of the operator $\next{1}$ acting on $\vs{m n-2}$, for the
  eigenvalue $1 / (1 - \cos(\pi/n))$.
\end{Proposition}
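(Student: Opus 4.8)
The plan is to verify the eigenvector relation directly from the defining identity \eqref{def_next}, with essentially no computation beyond matching three coefficients of a quadratic. First I would record the bookkeeping. Since $D=1$ we have $2D-2=0$, so $\next{1}$ maps $\vs{d}$ to itself, and the assertion that $S_{m,n}$ is an eigenvector is meaningful on $\vs{mn-2}$. By \cref{explicit_S} the polynomial $S_{m,n}$ has degree $mn-2$, and a short check using that $m$ is odd shows its coefficient sequence is palindromic: the coefficient of $x^{mn-2-k}$ is proportional to $\sin\!\big((mn-1-k)\pi/n\big)=\sin\!\big(m\pi-(k+1)\pi/n\big)$, which equals $\sin\!\big((k+1)\pi/n\big)$ because $(-1)^m=-1$. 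Hence $S_{m,n}\in\vs{mn-2}$ and $\next{1}(S_{m,n})$ is well defined.

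Write $\lambda = 1/(1-\cos(\pi/n))$. Because $\next{1}(S_{m,n})$ is the unique polynomial satisfying \eqref{def_next} with $d=mn-2$ and $D=1$, proving $\next{1}(S_{m,n})=\lambda S_{m,n}$ is equivalent to establishing the polynomial identity
\[
  (x^{mn}+1)\,S_{m,n}(1) - 2x\,S_{m,n}(x) = \lambda\,(x-1)^2\,S_{m,n}(x),
\]
that is, $(x^{mn}+1)\,S_{m,n}(1) = \bigl[\lambda(x-1)^2 + 2x\bigr]\,S_{m,n}(x)$. The key step is to invoke the defining factorization from \eqref{defi_Sn}, namely $x^{mn}+1 = \bigl(x^2 - 2x\cos(\pi/n)+1\bigr)\,S_{m,n}(x)$, which is exactly what makes $S_{m,n}$ a polynomial since $e^{\pm i\pi/n}$ are $mn$-th roots of $-1$ when $m$ is odd. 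Substituting this for $x^{mn}+1$ and cancelling the common factor $S_{m,n}(x)$ reduces the claim to the purely quadratic identity
\[
  \bigl(x^2 - 2x\cos(\pi/n)+1\bigr)\,S_{m,n}(1) = \lambda(x-1)^2 + 2x .
\]
Evaluating \eqref{defi_Sn} at $x=1$ gives $S_{m,n}(1) = 2/\bigl(2 - 2\cos(\pi/n)\bigr)=\lambda$ for free, so both sides already agree in their leading and constant coefficients; comparing the coefficient of $x$ yields $-2\lambda\cos(\pi/n) = 2 - 2\lambda$, which rearranges to $\lambda\bigl(1-\cos(\pi/n)\bigr)=1$, precisely the definition of $\lambda$. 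This closes the argument.

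There is no deep obstacle here: the entire computation collapses once the factor $S_{m,n}(x)$ is cancelled, so the only thing to be careful about is the reduction itself. The implicit divisibility by $(x-1)^2$ in $\next{1}$ requires no separate check, since $\next{1}$ is defined on all of $\vs{mn-2}$ and $S_{m,n}$ lies there. The one mildly clever point is recognizing that the factorization \eqref{defi_Sn} turns a seemingly high-degree identity into a trivial degree-two one; after that, matching three coefficients—two of which coincide automatically because $S_{m,n}(1)=\lambda$—forces exactly the asserted eigenvalue.
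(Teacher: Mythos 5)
Your proof is correct and takes essentially the same route as the paper, whose proof is only a one-line sketch: an explicit computation combining the rational-function definition \eqref{defi_Sn} with the defining relation \eqref{def_next} of $\next{1}$. Substituting the factorization $x^{mn}+1 = \bigl(x^2-2x\cos(\pi/n)+1\bigr)S_{m,n}(x)$ into \eqref{def_next}, cancelling $S_{m,n}(x)$, and using $S_{m,n}(1)=1/(1-\cos(\pi/n))$ is precisely the computation the authors allude to, carried out cleanly.
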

\begin{proof}
  The proof is another explicit computation using the definition of $S_{m,n}$
  in \eqref{defi_Sn} and the definition of the operator $\next{1}$ in
  \eqref{def_next}.
\end{proof}

Note that the eigenvalue is also the value $S_{m,n}(1)$.

In general, the Galois conjugates of the polynomial $S_{1,n}$ are not
providing a complete set of eigenvectors for the operator $\next{1}$
acting on $\vs{n - 2}$. The other eigenvectors are $S_{m,n/m}$ for
odd divisors $m$ of $n$, and their Galois conjugates.

The familly of operators $\next{1}$ acting on the spaces $\vs{n-2}$ of
palindromic polynomials looks very much like discrete versions of the
Laplacian operator $\partial_x^2$ acting on the space of functions $f$
on the real interval $[0,1]$ such that $f(1-x)=f(x)$ for all $x$ and
$f(0) = f(1) = 0$.

\section{Asymptotic behaviour from recurrence}

Our next point is to justify in a heuristic way that iterating an
operator $\next{D}$ for some $D > 1$ produces a sequence of
polynomials that gets closer and closer to the sinus polynomials
$S_{1,n}$. We have not tried to make these computations rigourous.

Let us consider a familly of polynomials $H_n$ of index $n$ defined by
iterating $\next{D}$, starting from an arbitrary palindromic
polynomial $H_m$ with nonnegative coefficients and index $m$. In all
this section, the index $n$ belongs to an arithmetic progression
of step $\delta=2D-2$ starting at $m$. Let us denote
\begin{equation}
  H_n(x) = \sum_{k=0}^{n} H_{n,k} x^k.
\end{equation}

We will assume the following asymptotic ansatz for the constant terms:
\begin{equation}
  \label{ansatz_Fn0}
  H_n(0) \simeq \cs{A} \cs{B}^{n} n^\cs{C} n^{\cs{E}n},
\end{equation}
for some constants $\cs{A},\cs{B},\cs{C},\cs{E}$, with
$\cs{A},\cs{B},\cs{E}$ positive. This ansatz is motivated by the known
case of the tangent numbers, where $\cs{B}=\frac{2}{e\pi}$,
$\cs{E}=1$ and $\cs{C}=-1/2$. This ansatz implies that
\begin{equation}
  H_{n+\delta}(0) / H_n(0) \simeq \cs{B}^{\delta} e^{\delta\cs{E}} n^{\delta\cs{E}}.
\end{equation}

We will also assume that there exists a smooth function $\Psi$ which
is a probability distribution function on the real interval $[0,1]$
vanishing at $0$ and $1$, with $\Psi(1-x) = \Psi(x)$ on this interval
and such that
\begin{equation}
  \label{ansatz_Fnk}
  H_{n,k} \simeq \frac{\alpha_n}{n}\Psi\left(\frac{k}{n}\right) + H_{n,0}
\end{equation}
is a good asymptotic approximation when $n$ is large, for some
sequence $\alpha_n$ to be determined.

Taking the sum of \eqref{ansatz_Fnk} over $k$ ranging from $0$ to $n$ and using the
hypothesis on $\Psi$, one gets
\begin{equation*}
  H_{n+\delta,0} = H_n(1) \simeq \alpha_n + (n + 1) H_{n,0},
\end{equation*}
Assuming that $n H_{n,0}$ is negligible compared to $H_{n+\delta,0}$, one
obtains that a correct choice for $\alpha_n$ is
\begin{equation*}
  \alpha_n = H_{n+\delta,0}.
\end{equation*}

From \eqref{def_next}, one deduces that the action of $\next{D}$ at the level of coefficients is given by
\begin{equation}
  \label{recu_coeffs}
  H_{n+\delta,k+D} - 2 H_{n+\delta,k+D-1} + H_{n+\delta,k+D-2} = -2 H_{n,k},
\end{equation}
except for $k=0$ and $k=n$.

Replacing in \eqref{recu_coeffs} the coefficients by the expression from
\eqref{ansatz_Fnk}, one obtains
\begin{multline}
  \frac{\alpha_{n+\delta} }{n+\delta}\left( \Psi\left(\frac{k+D}{n+\delta}\right)-2 \Psi\left(\frac{k+D-1}{n+\delta}\right) +\Psi\left(\frac{k+D-2}{n+\delta} \right)\right) \\\simeq -2 \left(\frac{\alpha_n}{n} \Psi\left(\frac{k}{n}\right) + H_{n,0}\right) 
\end{multline}

Using now the growth ansatz, one can get rid of $H_{n,0}$ in
the rightmost term and obtain
\begin{equation}
  \Psi\left(\frac{k+D}{n+\delta}\right)-2 \Psi\left(\frac{k+D-1}{n+\delta}\right) +\Psi\left(\frac{k+D-2}{n+\delta} \right) \\\simeq -2 \frac{\alpha_n}{\alpha_{n+\delta}} \Psi\left(\frac{k}{n}\right).
\end{equation}
The left hand side is an approximation of the second derivative of
$\Psi$, so that one obtains
\begin{equation}
  \frac{1}{2 (n+\delta)^{2}} \Psi''\left(\frac{k}{n+\delta}\right) \simeq -2 \frac{\alpha_n}{\alpha_{n+\delta}} \Psi\left(\frac{k}{n}\right).
\end{equation}
If $\delta \cs{E}=2$, one therefore reaches the following
differential equation
\begin{equation}
  \Psi'' = - \cs{F} \Psi,
\end{equation}
where $\cs{F} = \frac{4}{\cs{B}^{\delta}e^2}$. Because $\Psi$ vanishes at $0$, it must
be a multiple of $\sin(\sqrt{\cs{F}} x)$. Because $\Psi$ vanishes at $1$
and is positive on the interval $[0,1]$, necessarily $\cs{F} =
\pi^2$ and therefore $\cs{B}^{\delta}=(\frac{2}{e\pi})^2$. Because $\Psi$ is a probability distribution, one must have
$\Psi = \frac{\pi}{2} \sin(\pi x)$.

One can therefore conclude that, under several plausible but unproven
assumptions, the asymptotic shape of the coefficients of the
polynomials $H_n$ is approximating that of the polynomials $S_{1,n+2}$.








\section{Various remarks}

\subsection{Action of the operator $\next{0}$.}

Applying the operator $\next{0}$ decreases the index by $2$, so that
iterating this operator on any initial polynomial $P$ of index $d$
always vanishes after a finite number of steps. Let $\next{0}^{\max}$ be the last
non-identically zero iterate of $\next{0}$ acting on $\vs{d}$. Let us
denote by $\rho$ the linear map that maps $P$ to the constant term of
$\next{0}^{\max}(P)$.

For example, here is a sequence of iterates of $\next{0}$:
\begin{equation*}
  x^{4} + x^{3} + x^{2} + x + 1, \quad 3x^{2} + 4x + 3,\quad  4.
\end{equation*}
In this case, $\rho(x^{4} + x^{3} + x^{2} + x + 1) = 4$.

Let us present some special cases of initial choices where the value
of $\rho$ is interesting.

\medskip

For $n\geq 0$, consider the polynomial
\begin{equation}
  Q_n(t) = \sum_{i=0}^{2n+1} \rho\left(\frac{x^i-x^{2n+1-i}}{x-1}\right) t^i,
\end{equation}
recording this sequence of final values. By antisymmetry of the
argument of $\rho$, the polynomial $Q_n$ vanishes at $t=1$. Let
$P_n(t)$ be the quotient $Q_n(t) / (t-1)$, which is clearly a
palindromic polynomial.

\begin{Proposition}
  For every $n \geq 0$, the polynomial $P_n$ is the Poupard polynomial $F_{n+1}$.
\end{Proposition}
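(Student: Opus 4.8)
The plan is to unwind the definition of $P_n$ into a transparent closed form, to reinterpret the functional $\rho$ as a pairing against $F_{n+1}$, and to prove the requisite duality between $\next{0}$ and $\next{2}$. First I would simplify. For $0\le i\le n$ set $R_i = x^{i}+x^{i+1}+\dots+x^{2n-i}$, a palindromic polynomial of index $2n$; these form a basis of $\vs{2n}$. A one-line computation gives $\frac{x^i-x^{2n+1-i}}{x-1} = -R_i$ for $i\le n$ and $=R_{2n+1-i}$ for $i\ge n+1$, so by linearity of $\rho$,
\[
  Q_n(t) = \sum_{i=0}^{n}\rho(R_i)\,(t^{2n+1-i}-t^i),
  \qquad
  P_n(t) = \frac{Q_n(t)}{t-1} = \sum_{i=0}^{n}\rho(R_i)\,R_i(t).
\]
Hence it suffices to prove that the numbers $\rho(R_i)$ are exactly the coordinates of $F_{n+1}$ in the basis $\{R_i\}$ of $\vs{2n}$.

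Next I would reinterpret $\rho$. Since $\next{0}$ lowers the index by $2$, its last non-vanishing iterate on $\vs{2n}$ is $\next{0}^{\,n}\colon\vs{2n}\to\vs{0}$, and taking the constant term of an element of $\vs{0}$ is the identity; thus $\rho=\next{0}^{\,n}$ as a functional on $\vs{2n}$. Since $F_{n+1}=\next{2}^{\,n}(1)$ by \eqref{recu_F}, the statement would follow from a duality between $\next{0}$ and $\next{2}$. I would introduce the discrete Dirichlet pairing
\[
  \langle A,B\rangle = \sum_k \Delta a_k\,\Delta b_k
  = \mathrm{CT}\!\left[(1-x)A(x)\,(1-x^{-1})B(x^{-1})\right],
\]
where $\Delta a_k=a_k-a_{k-1}$ and $\mathrm{CT}$ is the constant term in $x$. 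With respect to this pairing the basis $\{R_i\}$ is orthogonal with $\langle R_i,R_i\rangle = 2$, in keeping with the Laplacian heuristic of Section 3. The heart of the argument, and the step I expect to be the main obstacle, is the adjointness
\[
  \langle \next{0}(P),Q\rangle = \langle P,\next{2}(Q)\rangle
  \qquad (P\in\vs{2n},\ Q\in\vs{2n-2}).
\]

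To prove the adjointness I would substitute the defining identity \eqref{def_next} and use palindromicity in the form $P(x^{-1})=x^{-2n}P(x)$ to rewrite each side as a single constant-term extraction. The denominators $(x-1)^2$ cancel against the factors produced by $\next{0}$ and $\next{2}$, the contributions proportional to $P(1)$ and $Q(1)$ vanish because $P$ and $Q$ carry no monomials of the offending degrees, and both sides collapse to the symmetric quantity $2\,[x^{2n-1}](PQ)$. This common value is exactly what makes the duality work; the computation is short once the constant-term bookkeeping is set up.

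Finally, iterating the adjointness identity $n$ times and using the base case on $\vs{0}$ yields $\rho(P)=\next{0}^{\,n}(P)=\tfrac12\langle P,\next{2}^{\,n}(1)\rangle=\tfrac12\langle P,F_{n+1}\rangle$ for every $P\in\vs{2n}$. Taking $P=R_i$ and invoking orthogonality gives $\rho(R_i)=\beta_i$, where $F_{n+1}=\sum_i\beta_i R_i$; substituting into the formula for $P_n$ above gives $P_n=\sum_i\beta_i R_i=F_{n+1}$, as desired. Minor care is needed to justify that $\next{0}^{\,n}$ is indeed the maximal non-vanishing iterate on all of $\vs{2n}$ and to keep track of the factor $2$ arising from the normalization $\langle R_i,R_i\rangle=2$, but both are routine.
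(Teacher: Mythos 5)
Your proof is correct, and it takes a genuinely different route from the paper's. The paper argues by induction on $n$: writing $c_{n,i}$ for the coefficients of $P_n$, it identifies $c_{n,i}=\rho\bigl(\frac{x^{i+1}-1}{x-1}\frac{x^{2n+1-i}-1}{x-1}\bigr)$, computes the second differences $c_{n,i+2}-2c_{n,i+1}+c_{n,i}=-2c_{n-1,i}$ together with the boundary relations (using \cref{kernel} and \cref{next0_qint}), and concludes that $(P_n)$ satisfies the same recurrence as $(F_{n+1})$ with the same initial term, i.e.\ that $P_n$ is the image of $P_{n-1}$ under the operator driving the Poupard recurrence (this is $\next{2}$; the paper's mention of $\next{1}$ at that point is a slip). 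You instead prove that $\next{0}$ and $\next{2}$ are adjoint for the discrete Dirichlet pairing $\langle A,B\rangle=\sum_k\Delta a_k\,\Delta b_k$, for which your basis $R_i$ is orthogonal with $\langle R_i,R_i\rangle=2$, and deduce the closed formula $\rho(P)=\frac{1}{2}\langle P,F_{n+1}\rangle$ on all of $\vs{2n}$, from which the proposition is immediate. I have checked the computation you left as a sketch, and it does go through: with $P\in\vs{2n}$, $Q\in\vs{2n-2}$, and $p_j,q_j$ their coefficients, both $\langle\next{0}(P),Q\rangle$ and $\langle P,\next{2}(Q)\rangle$ collapse to $2\sum_j p_{j+1}q_j=2[x^{2n-1}](PQ)$, the terms carrying $P(1)$ and $Q(1)$ vanishing because they extract the coefficients $q_{-1}$, $q_{2n-1}$, $p_{-1}$, $p_{2n+1}$, all zero for degree reasons; the base case $\frac{1}{2}\langle c,1\rangle=c$ on $\vs{0}$ and the identification $\rho=\next{0}^{\,n}$ (constants lie in the kernel of $\next{0}$ by \cref{kernel}, while positivity of coefficients keeps the earlier iterates nonzero) are routine, as you say. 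As for what each approach buys: the paper's computation is elementary and self-contained, reusing lemmas it has already established; yours is more structural, turning the Laplacian analogy of Section 3 into an actual self-adjointness statement, giving a formula for the functional $\rho$ on the whole space rather than coefficient-by-coefficient identities, and carrying over verbatim (with base case $\vs{1}$ and initial polynomial $1+x$) to the companion statement that $P'_n$ is the Kreweras polynomial $G_n$, which the paper proves by repeating the coefficient argument.
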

\begin{proof}
  For $n=0$, one can check that $P_n(t) = 1$. Assume $n > 0$. For
  $0 \leq i \leq 2n$, the coefficient $c_{n,i}$ of $t^i$ in $P_n(t)$
  can be written as
  \begin{equation}
    \label{coeff_c}
    - \rho\left( \sum_{0 \leq k \leq i} \frac{x^k-x^{2n+1-k}}{x-1} \right) =
   \rho\left(    \frac{x^{i+1}-1}{x-1} \frac{x^{2n+1-i}-1}{x-1} \right).
  \end{equation}
  Let us now compute $c_{n,i+2} - 2c_{n,i+1}+c_{n,i}$ for
  $0 \leq i \leq 2n - 2$.  Starting from the left hand side of
  \eqref{coeff_c}, this is given by
  \begin{equation*}
    \rho \left(x^{i+1} + x^{2n-i-1} \right).
  \end{equation*}
  Using now the equation \eqref{comput} for $\next{0}$ and the definition of
  $\rho$ as the final value for the iteration of $\next{0}$, this
  becomes
  \begin{equation*}
    2 \rho \left( \frac{x^{i+1}-1}{x-1}\frac{x^{2n-i-1}-1}{x-1} \right),
  \end{equation*}
  in which one can recognize $-2 c_{n-1, i}$ using the right hand side
  of \eqref{coeff_c}.

  Moreover, $c_{n,1} - 2 c_{n,0} = \rho( 1 + x^{2n}) = 0$ because
  $1 + x^{2n}$ is in the kernel of $\next{0}$ by \cref{kernel}.

  Let us now check that $c_{n,0} = \sum_{i=0}^{2n-2}
  c_{n-1,i}$. First, by \eqref{coeff_c}, the left hand side is the
  image by $\rho$ of $\next{0}(1+x+\dots+x^{2n})$, given by
  \Cref{next0_qint}. The right hand side is the image by $\rho$ of
  \begin{equation}
    \sum_{i=0}^{2n-2} \sum_{0 \leq k \leq i} \sum_{k \leq j \leq 2n-2-k} x^j =
    \sum_{j=0}^{2n-2} (2n-1-j)(j+1) x^j,
  \end{equation}
  which is the exact same expression.

  All these properties of the coefficients $c_{n,i}$ imply exactly
  that the polynomial $P_{n}(t)$ is the image of $P_{n-1}(t)$ by
  $\next{1}$, acting on the variable $t$.
\end{proof}


\medskip

For $n\geq 0$, consider the polynomial
\begin{equation}
  Q'_n(t) = \sum_{i=0}^{2n} \rho\left(\frac{x^i-x^{2n-i}}{x-1}\right) t^i,
\end{equation}
recording this sequence of final values. By antisymmetry of the
argument of $\rho$, the polynomial $Q'_n$ vanishes at $t=1$. Let
$P'_n(t)$ be the quotient $Q'_n(t) / (t-1)$, which is clearly a
palindromic polynomial of odd index.

\begin{Proposition}
  For every $n \geq 1$, the polynomial $Q'_n$ is the Kreweras
  polynomial $G_{n}$.
\end{Proposition}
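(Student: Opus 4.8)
The plan is to mirror the scheme of the preceding proposition, proving the equivalent statement that the palindromic quotient $P'_n = Q'_n/(t-1)$ coincides with the Kreweras polynomial $G_n$. The invariants match well: the argument $\tfrac{x^{i}-x^{2n-i}}{x-1}$ is antisymmetric under $i\mapsto 2n-i$, so $Q'_n$ is anti-palindromic of even index $2n$ with vanishing middle coefficient and $Q'_n(1)=0$; hence $P'_n$ is palindromic of odd index $2n-1$, exactly the index of $G_n$. Since the recurrence \eqref{recu_K} reads $G_n=\next{D}(G_{n-1})$ with $D=2$, and $\next{2}$ raises the index from $2n-3$ to $2n-1$, it suffices to prove by induction that $P'_n=\next{2}(P'_{n-1})$ for $n\ge 2$, the base case being the direct evaluation $P'_1 = 1+t = G_1$.

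For the inductive step I would denote by $c'_{n,i}$ the coefficient of $t^i$ in $P'_n$ and record, in exact analogy with \eqref{coeff_c}, the two expressions
\[ c'_{n,i} = -\rho\!\left(\sum_{0\le k\le i}\frac{x^{k}-x^{2n-k}}{x-1}\right) = \rho\!\left(\frac{x^{i+1}-1}{x-1}\,\frac{x^{2n-i}-1}{x-1}\right), \]
their equality being the same geometric-series simplification used before. Forming the second difference $c'_{n,i+2}-2c'_{n,i+1}+c'_{n,i}$ collapses the partial sums to $-\rho\big(x^{i+1}+x^{2n-2-i}\big)$; since $x^{i+1}+x^{2n-2-i}$ has odd index $2n-1$, formula \eqref{comput} for $\next{0}$ rewrites it as $2\,\rho\big(\tfrac{x^{i+1}-1}{x-1}\tfrac{x^{2n-2-i}-1}{x-1}\big)$, which the product form above identifies with $-2c'_{n-1,i}$. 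The recurring tool here is the identity $\rho(P)=\rho(\next{0}(P))$, valid because $\next{0}^{\max}$ is a fixed power of $\next{0}$ determined by the index of the ambient space, not by $P$.

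The two boundary relations follow from the lemmas already available. Telescoping gives $c'_{n,1}-2c'_{n,0}=-\rho(x^{2n-1}+1)$, which vanishes for $n\ge 2$ because $x^{2n-1}+1$ lies in the kernel of $\next{0}$ by \cref{kernel} while $\next{0}^{\max}$ is a positive power of $\next{0}$ on $\vs{2n-1}$. Similarly $c'_{n,0}=\rho(1+x+\dots+x^{2n-1})=\rho\big(\next{0}(1+x+\dots+x^{2n-1})\big)$ is computed by \Cref{next0_qint}; expanding $\sum_{i=0}^{2n-3}c'_{n-1,i}$ as the $\rho$-image of a triple sum and reindexing produces the same polynomial, giving $c'_{n,0}=\sum_i c'_{n-1,i}=P'_{n-1}(1)$. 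These three identities are precisely the coefficient-level form of \eqref{def_next} for $D=2$: the second-difference relation reproduces the bulk coefficients coming from $-2t^2P'_{n-1}(t)$, while the two boundary relations reproduce the coefficients of $t^0$ (into which the constant part of $(t^{2n+1}+1)P'_{n-1}(1)$ enters, fixing $c'_{n,0}=P'_{n-1}(1)$) and of $t^1$. Palindromy of both sides then forces agreement of the remaining high-degree coefficients, so $P'_n=\next{2}(P'_{n-1})$ and the induction closes.

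I expect the difficulty to be bookkeeping rather than conceptual. The points needing care are: the degenerate base case $n=1$, where $\next{0}^{\max}$ is the identity and the boundary identity $c'_{n,1}-2c'_{n,0}=0$ genuinely fails, so $n=1$ must be verified by hand as the seed of the induction; the combinatorial passage from the double sum $\sum_i c'_{n-1,i}$ to the closed form with coefficients $(2n-2-j)(j+1)$, which is the step most prone to index slips; and the parity and divisibility bookkeeping, since the index $2n-1$ is odd and, by \cref{power_of_two}, each application of $\next{2}$ introduces the further factor of $2$ that accounts for the divisibility of $G_n$ by $2^{n-1}$.
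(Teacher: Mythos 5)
Your proof is correct and takes essentially the same route as the paper: the paper's own proof simply says it mirrors the preceding Poupard argument (check the base case $P'_1 = 1+t$, then verify coefficient-by-coefficient that each step is an application of the operator), which is exactly what you carry out, in more detail and with the index ranges and the $n=1$ degeneracy handled carefully. You also correctly read the statement as concerning the quotient $P'_n$ rather than $Q'_n$, and the operator as $\next{2}$ rather than $\next{1}$, both of which are evidently typos in the paper's statement and proof.
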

\begin{proof}
  The proof is very similar to the previous one. One first check that
  $Q'_1$ is $1+x$. Then one checks by looking at coefficients that
  $Q'_{n+1}$ is $\next{1}(Q'_n)$.
\end{proof}




\smallskip

Let us now describe some similar conjectural properties. For the
starting sequence $(2^{-j} (1+x)^{2j})_{j \geq 0}$, one gets the
following values of $\rho$:
\begin{equation*}
  1, 1, 5, 61, 1385, 50521, 2702765, 199360981, 19391512145, \dots
\end{equation*}
which seem to be the Euler numbers \oeis{A364}. Similarly, for the
starting sequence $(2^{-j} (1+x)^{2j+1})_{j \geq 0}$, one gets
\begin{equation*}
  1, 3, 25, 427, 12465, 555731, 35135945, \dots
\end{equation*}
This seems to be the closely related sequence \oeis{A9843}.


As a final conjectural remark, let us consider the following extension of the two previous cases.
\begin{Conjecture}
  For every $i,j$, the number $\rho(x^i(1+x)^j)$ is divisible by $2^{\lfloor j/2 \rfloor}$.
\end{Conjecture}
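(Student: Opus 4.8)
The plan is to split according to the parity of $j$, since the index of the palindromic polynomial $x^i(1+x)^j$ equals $2i+j$. When $j$ is odd the index is odd, and \cref{power_of_two} applies directly: iterating $\next{0}$ exactly $t:=i+(j-1)/2$ times reduces the index from $2i+j$ down to $1$, and each application contributes a factor $2$, so $2^{\,t}$ divides the whole final iterate, hence divides its constant term $\rho(x^i(1+x)^j)$. As $t\ge (j-1)/2=\lfloor j/2\rfloor$, the odd case is settled with room to spare. The entire difficulty is therefore concentrated in the even case $j=2m$, where the index is even, \cref{power_of_two} gives nothing, and one must manufacture the factor $2^m$ by a finer $2$-adic analysis of the iteration.

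For even $j$, write $P_k=\next{0}^{\,k}(x^i(1+x)^j)$, of even index $d_k=2i+j-2k$, so that $\rho(x^i(1+x)^j)=P_e$ with $e=i+j/2$. Let $c_k$ be the $2$-adic valuation of the content (the gcd of the coefficients) of $P_k$, and $a_k=v_2(P_k(1))$. Two elementary facts drive the argument. First, in \eqref{def_next} with $D=0$ the numerator $(x^{d_k}+1)P_k(1)-2P_k$ has all coefficients divisible by $2^{\min(a_k,\,c_k+1)}$, and dividing by the primitive polynomial $(x-1)^2$ preserves this divisibility (Gauss's lemma), so $c_{k+1}\ge\min(a_k,c_k+1)$; in particular $c_k$ is non-decreasing and increases by at least $1$ whenever $a_k>c_k$. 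Second, since $P_k(1)$ is a sum of coefficients of $P_k$, one always has $a_k\ge c_k$.

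I would then reduce the statement to the single invariant
\begin{equation}
  a_k+c_k\ge j \qquad\text{for all } k.
\end{equation}
Indeed, if $c_k<m=j/2$ the invariant forces $a_k\ge j-c_k>c_k$, hence $c_{k+1}\ge c_k+1$; thus $c_k$ increases strictly until it reaches $m$, which happens at some step $\le m\le e$, and stays $\ge m$ afterwards. Consequently $c_e=v_2\bigl(\rho(x^i(1+x)^j)\bigr)\ge m=\lfloor j/2\rfloor$, as desired. The invariant itself I would prove by induction on $k$. The base case $a_0+c_0=j+0=j$ is immediate (the content of $(1+x)^j$ is $1$ and $P_0(1)=2^j$), and the step is painless when $c_k\ge m$: then $c_{k+1}\ge c_k\ge m$ and $a_{k+1}\ge c_{k+1}\ge m$, so their sum is $\ge j$.

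The hard part will be the inductive step when $c_k<m$. Expanding the Taylor jet of \eqref{def_next} at $x=1$ yields the clean recursion $P_{k+1}(1)=\binom{d_k}{2}P_k(1)-P_k''(1)$. The first term has valuation $\ge a_k\ge j-c_k$ by the induction hypothesis, so everything hinges on showing that the second term $P_k''(1)$ — equivalently the central second moment of the primitive part of $P_k$ — is divisible by a comparably large power of $2$. The naive estimate gives only $a_{k+1}\ge\min(a_k,c_k+1)=c_k+1$, which is too weak when $c_k$ is small (already for $(1+x)^4$ one needs the non-obvious value $a_1=4$, not $1$). The natural recursion for $P_k''(1)$ involves $P_k''''(1)$, and so on, so the system of $2$-adic valuations of the even derivatives $P_k^{(2r)}(1)$ does not close. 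Locating a finite conserved $2$-adic quantity inside this jet — or, alternatively, producing an explicit combinatorial formula for $\rho(x^i(1+x)^j)$ generalizing the Euler numbers found for $i=0$ (which are odd, making the bound sharp) — is the main obstacle I expect.
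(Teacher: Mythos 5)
Your proposal does not prove the statement, and it is worth being explicit about why: this statement is left as a \emph{Conjecture} in the paper. The authors themselves prove nothing beyond the observation you reproduce, namely that the odd-$j$ case follows from \cref{power_of_two}, and they explicitly flag the even case as the real issue (``clear if $j$ is odd \dots but not at all if $j$ is even''). Your odd-case argument is essentially the paper's remark and is correct, modulo the small point that one should check the iteration of $\next{0}$ survives (stays non-zero) all the way down to index $1$, which follows from the positivity and concavity lemmas of Section 2. But the entire content of the conjecture is the even case, and there your text is a plan with an admitted hole, not a proof.

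Concretely, the gap is the inductive step of your invariant $a_k + c_k \geq j$ in the regime $c_k < j/2$. The pieces you do establish are sound: $c_{k+1}\geq\min(a_k,c_k+1)$ via Gauss's lemma, $a_k\geq c_k$, the identity $P_{k+1}(1)=\binom{d_k}{2}P_k(1)-P_k''(1)$, and the fact that the invariant would imply the conjecture. But when $c_k$ is small, proving the invariant requires a lower bound on $v_2\bigl(P_k''(1)\bigr)$ of the order of $j-c_k$, and nothing in the recursion \eqref{def_next} visibly supplies it: as you note yourself, the naive estimate only gives $a_{k+1}\geq c_k+1$, which already fails to explain the value $a_1=4$ for $(1+x)^4$, and the valuations of the higher even derivatives $P_k^{(2r)}(1)$ do not close into a finite system. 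Since you acknowledge this is ``the main obstacle,'' the honest assessment is that your proposal is a correct reduction of the conjecture to an unproven $2$-adic statement about the iterates of $\next{0}$. In that respect it goes somewhat further than the paper, which offers no strategy at all for the even case, but it does not settle the statement, and the statement should continue to be regarded as open.
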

This property is clear if $j$ is odd by \cref{power_of_two}, but not
at all if $j$ is even.

Assuming this conjecture, one can define, for every integer $n$, the
square matrix $\mathbf{M}_n$ whose coefficient $\mathbf{M}_n(i,j)$,
for $0 \leq i \leq n$ and $0 \leq j \leq n$, is
$\rho(x^i(1+x)^j)2^{-\lfloor j/2 \rfloor}$. 
\begin{Conjecture}
For all $n\geq 0$, the determinant $d_{n}$ of the matrix $\mathbf{M}_n$ is given by the formula
\begin{equation}
  \label{nice_det}
  d_n = (n-1)!^{\epsi(1)} (n-2)!^{\epsi(2)} (n-3)!^{\epsi(3)} \dots 1!^{\epsi(n-1)},
\end{equation}
where
\begin{equation*}
  \epsi(k) = \begin{cases}
    2 \quad\text{if $k$ is odd,}\\
    4 \quad\text{if $k$ is even.}
    \end{cases}
\end{equation*}
\end{Conjecture}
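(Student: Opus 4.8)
The plan is to treat this as an integer determinant evaluation driven by the functional equation $\rho = \rho\circ\next{0}$ (valid on indices at least $2$, and supplemented by the fact that $\rho$ is just the constant-term functional on $\vs{0}$ and $\vs{1}$). Before anything else one must know that $\mathbf{M}_n$ has integer entries, which is the content of the preceding divisibility conjecture: the odd-$j$ case is \cref{power_of_two}, and for even $j$ I would prove $2^{j/2}\mid\rho(x^i(1+x)^j)$ by tracking the $2$-adic valuation of the coefficients through a single application of $\next{0}$, starting from the identity $(x-1)^2\next{0}(x^i(1+x)^j)=2^j(x^{2i+j}+1)-2x^i(1+x)^j$, controlling the division by $(x-1)^2$, and then descending to index $0$ while checking that the valuation is not destroyed by $\rho$.

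Since $\rho$ is graded by the index it is \emph{not} linear across distinct indices, so the naive recursion suggested by $x^i(1+x)^{j+2}=x^i(1+x)^j+2x^{i+1}(1+x)^j+x^{i+2}(1+x)^j$ is illegitimate (its three terms live in three different spaces $\vs{d}$). The only admissible mechanism is $\rho(P)=\rho(\next{0}P)$. Writing $x^i(1+x)^j=\sum_k\binom{j}{k}x^{i+k}$ and using \eqref{comput}, the evaluation of $\rho$ on the palindromic monomial basis reduces to the symmetric quantities $r(a,b):=\rho\big((1-x^a)(1-x^b)/(1-x)^2\big)$, which by \eqref{comput} are, up to a factor $2$, the values of $\rho$ on the basis elements $x^i+x^{d-i}$ and which satisfy their own two-index recursion under $\next{0}$. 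I would first obtain a usable handle on $r(a,b)$, then assemble $\mathbf{M}_n$ from the $r(a,b)$ and the Pascal-type coefficients $\binom{j}{k}$, aiming for a factorization of $\mathbf{M}_n$ as a product of a binomial matrix and a matrix built purely from the $r(a,b)$.

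For the evaluation itself there are two natural routes. The claimed product telescopes to the pivots $d_n/d_{n-1}=\prod_{m=2}^{n-1}m^{\epsi(n-m)}$, a product of squares and fourth powers of $2,\dots,n-1$, so one target is an explicit LU or triangular reduction of $\mathbf{M}_n$ whose successive diagonal entries are exactly these products; the base of the induction is the $i=0$ row, whose normalised entries are the interlaced Euler and \oeis{A9843} numbers. Because these are secant/tangent-type moments, the quantities $r(a,b)$ are good candidates for the moments of an orthogonal-polynomial system, and the Jacobi continued fraction for $\sec+\tan$ would then deliver the determinant as a product; moreover the exponents $2$ and $4$, together with the palindromic doubling, hint at a Lindstr\"{o}m--Gessel--Viennot model by two (respectively four) families of non-intersecting lattice paths. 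Failing an explicit factorization, I would fall back on Desnanot--Jacobi condensation, using the $\next{0}$-recursion of the previous paragraph to control the connected border minors and induct on $n$.

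The hard part will be pinning down the pivots. The entries $\rho(x^i(1+x)^j)$ have irregular prime factorisations and seem to admit no simple product formula, so a direct Krattenthaler-style evaluation looks unlikely and the weight of the argument rests on propagating the $\next{0}$-relation cleanly through the determinant. I expect two specific obstacles: first, the even-$j$ divisibility, which is itself open and underlies the very integrality of $\mathbf{M}_n$; and second, the need to separate, inside a single inductive invariant, the powers of $2$ coming from the normalisation $2^{-\lfloor j/2\rfloor}$ from the factorial growth that the harmonic-extension nature of $\rho$ forces upon the entries — recall that $\next{1}$ is a discrete Laplacian, so $\rho$ behaves like the inverse of an iterated second difference, which is the structural reason factorials appear. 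Reconciling these two arithmetic layers is, I believe, the crux of the proof.
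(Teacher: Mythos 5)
First, the context you could not have known: in the paper this statement is a \emph{Conjecture}, not a theorem. The authors offer no proof at all, only the numerical check that $\det \mathbf{M}_6 = 5!^2\,4!^4\,3!^2\,2!^4\,1!^2$. So there is no argument of theirs to compare yours against; your proposal has to stand on its own as a proof, and it does not.

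What you have written is a research plan, not a proof — every load-bearing step is left open. (i) The integrality of the entries of $\mathbf{M}_n$ rests on the paper's preceding conjecture, the divisibility of $\rho(x^i(1+x)^j)$ by $2^{\lfloor j/2\rfloor}$, which the paper explicitly flags as open when $j$ is even (only the odd case follows from \cref{power_of_two}). Your plan to ``track the $2$-adic valuation through one application of $\next{0}$'' names exactly the difficulty rather than resolving it: the division by $(x-1)^2$ in \eqref{def_next} is precisely what destroys naive valuation bounds, which is presumably why the authors left this as a conjecture. (ii) For the determinant itself you float four candidate mechanisms — an LU reduction with prescribed pivots, a moment/J-fraction interpretation, a Lindstr\"{o}m--Gessel--Viennot model, and Desnanot--Jacobi condensation — but execute none of them: you never derive a closed form or even a usable recursion for your quantities $r(a,b)$, never exhibit a triangular factorization, and never verify a condensation identity. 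The two concrete things you do establish are correct and worthwhile — the telescoping of \eqref{nice_det} to $d_n/d_{n-1}=\prod_{m} m^{\epsi(n-m)}$, and the observation that $\rho$ is \emph{not} linear across distinct indices (e.g.\ $\rho(x)=1$ while $\rho(1+x)-\rho(1)=0$), so the Pascal recursion $x^i(1+x)^{j+2}=x^i(1+x)^j+2x^{i+1}(1+x)^j+x^{i+2}(1+x)^j$ cannot be pushed through $\rho$ termwise — but both identify constraints on a future proof rather than supply one. As it stands, nothing in the proposal establishes \eqref{nice_det} for any $n$ beyond what direct computation already gives, so the statement remains exactly as open as the paper leaves it.
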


For example, $\mathbf{M}_6$ is equal to
\begin{equation}
\left(\begin{array}{rrrrrr}
1 & 1 & 1 & 3 & 5 & 25 \\
1 & 2 & 3 & 14 & 33 & 226 \\
2 & 8 & 18 & 120 & 378 & 3336 \\
10 & 64 & 198 & 1728 & 6858 & 74304 \\
104 & 896 & 3528 & 38016 & 182088 & 2339712 \\
1816 & 19456 & 92808 & 1188864 & 6668568 & 99118080
\end{array}\right)
\end{equation}
whose determinant is indeed $5!^2 4!^4 3!^2 2!^4 1!^2$.

This matrix contains entries with large prime factors, for example $92808 = 2^3  3^2  1289$, but the determinant has only small prime factors.

\subsection{Action of the operator $\next{1}$}

Applying the operator $\next{1}$ does not change the index, so
iterating this operator on any initial choice gives an infinite
sequence of palindromic polynomials of the same index.

For example, starting with $x$ gives a sequence of polynomials
\begin{equation*}
x, x^{2} + 2x + 1, 4x^{2} + 6x + 4, 14x^{2} + 20x + 14, 48x^{2} + 68x + 48, 164x^{2} + 232x + 164, \dots
\end{equation*}
whose constant terms and middle coefficients are given by \oeis{A7070}
and by \oeis{A6012}. Indeed, the action of $\next{1}$ on reciprocal
polynomials of index $2$ is given in the basis $\{1+x^2,x\}$ by the
matrix
\begin{equation*}
  \left(\begin{array}{rr}
          2 & 1 \\
          2 & 2
        \end{array}\right)
\end{equation*}
so that both sequences satisfy the recurrence
$a_n = 4 a_{n-1} - 2 a_{n-2}$ with appropriate initial conditions.

\subsection{Action of the operator $\next{2}$.}

Applying the operator $\next{2}$ increases the index by $2$, so
iterating this operator gives an infinite sequence of polynomials for
every initial choice. In each such sequence, the sequence of constant
terms is, up to a shift of indices by one, the same as the sequence of
values at $x=1$. Some examples were presented in the introduction,
related to reduced tangent numbers and Genocchi numbers. Let us record
one more familly of examples.

Using the polynomials $x^i(x+1)$ for $i \geq 0$ as starting points,
one gets a table of constant terms:
\begin{equation*}
\left(\begin{array}{rrrrrr}
1 & 1 & 3 & 17 & 155 & 2073 \\
0 & 1 & 6 & 55 & 736 & 13573 \\
0 & 1 & 10 & 135 & 2492 & 60605 \\
0 & 1 & 15 & 280 & 6818 & 211419 \\
0 & 1 & 21 & 518 & 16086 & 619455 \\
0 & 1 & 28 & 882 & 34020 & 1592811
\end{array}\right)
\end{equation*}
Here $i$ is the row index and in each row the term of index $j$ is
the constant term divided by $2^j$. This table seems to be essentially the Salié
triangle \oeis{A65547}.

\bibliographystyle{plain}
\bibliography{rootpoupard.bib}

\end{document}